\newtheorem{theorem}{Theorem}[section]
\newtheorem{lemma}[theorem]{Lemma}
\theoremstyle{definition}
\newtheorem{definition}[theorem]{Definition}
\theoremstyle{remark}
\newtheorem{remark}[theorem]{Remark}
\numberwithin{equation}{section}
\begin{document}

\title{Unmating of expanding Thurston maps with Julia set $\mathbb{S}^2$} 
\author{Mary Wilkerson}
\date{}

\maketitle
\begin{abstract}

Every expanding Thurston map $f$ without periodic critical points is known to have an iterate $f^n$ which is the topological mating of two polynomials. This has been examined by Kameyama and Meyer; the latter who has offered an explicit construction for finding two polynomials in the unmating of the iterate. Initializing this algorithm depends on an invariant Jordan curve through the postcritical set of $f$--but we propose adjustments to this unmating algorithm for the case where there exists a curve which is fully $f$-invariant up to homotopy and not necessarily simple. When $f$ is a critically pre-periodic expanding Thurston map, extending the algorithm to accommodate non-Jordan curves in this manner allows us to unmate without iterates.

\end{abstract}

\tableofcontents
\addtocontents{toc}{\vskip-40pt} 
 
\let\thefootnote\relax\footnote{\noindent\emph{* Key words and phrases.} mating, unmating, Thurston maps, rational maps.}
\let\thefootnote\relax\footnote{* 2020 Mathematics subject classification: Primary 37F20; Secondary 37F10}
\let\thefootnote\relax\footnote{* This work has been submitted to Contemporary Mathematics. Copyright in this Work may be transferred without further notice.}


\section{Introduction}
 
Some rational maps contain dynamics similar to those of polynomials. In the 1980s, Douady and Hubbard explored this phenomenon with the introduction of \emph{mating}--which is now the name for a collection of operations that combine two polynomials into a new map with shared dynamics. The classic approach starts with two monic degree $d$ postcritically finite polynomials acting on disjoint copies of their respective filled Julia sets: if appropriate conditions are satisfied, the domains are identified along their Julia set boundaries, yielding a topological sphere. The polynomial mappings then descend to a branched covering of this sphere called the \emph{topological mating}, which is Thurston-equivalent to a rational map on $\widehat{\mathbb{C}}$. 

In the event that the two polynomials are critically preperiodic, their Julia sets are dendrites. This yields that there is an equator-like curve containing a full set of equivalence class representatives for the equivalence relation $\approx$ generating this quotient space. An immediate consequence is that the topological mating in this case is then semiconjugate to the $d$-fold map on $\mathbb{S}^1$. 

Closed curves on $\mathbb{S}^2$ with similar properties to this ``equator" can be very useful in investigating the construction and decomposition of certain rational maps: namely those exhibiting behavior similar to polynomials within their dynamics.   For example, \cite{BOYD2012MEDUSA} and \cite{WILKERSON2019THURSTON} both utilize the Thurston pullback algorithm along with equator-like structures to develop approximations to the associated rational map.  
Conversely, curves with properties resembling those of an equator can be used to decompose or ``unmate" branched coverings arising as matings. In \cite{KAMEYAMA2003PART2}, Kameyama demonstrates that any expanding Thurston maps which is \emph{orientedly $S1$-parametrizable} (See Subsection \ref{oriented}) has an iterate which is Thurston equivalent to an essential mating. In \cite{MEYER2013INVARIANT}, Meyer offers the explicit construction of an $S1$-parameterization. This is elaborated upon in the subsequent papers \cite{MEYER2009EXPANDING} and \cite{MEYER2014UNMATING}, where the $S1$-parameterization is used to unmate a rational map.

Meyer notes that a \emph{pseudo-equator} is a sufficient condition for an expanding postcritically finite rational map with Julia set the Riemann sphere to be a mating, but that it is not necessary: some such rational maps which arise as matings do not have a pseudo-equator. \cite{MEYER2009EXPANDING}. Further, the unmating algorithm described in \cite{MEYER2014UNMATING} does not apply when an initial curve is not Jordan. Our goal is to explore and address these issues. 
 
We start with prerequisite topics in Section \ref{prereq}.  This section includes preliminaries on expanding Thurston maps and matings to discuss the objects we wish to decompose; as well as laminations, oriented curves, and tilings to highlight  tools that will be used to develop their decomposition. We detail the current literature on decomposition of rational maps and unmating in Section \ref{litreview}.  The main results are developed in Section \ref{revised}, where we discuss accommodations to extend the reach of Meyer's unmating algorithm. As the arguments presented here are intended to build upon the constructions given in \cite{KAMEYAMA2003PART2}, \cite{MEYER2009EXPANDING}, \cite{MEYER2013INVARIANT}, and \cite{MEYER2014UNMATING}, we will use corresponding results that are still applicable freely, but also emphasize required adjustments where necessary and highlight implications of our changes.  We then conclude with an example and further avenues of exploration.

%
%
\section{Preliminaries}\label{prereq}

\subsection{Notation} We utilize many notational conventions adopted in \cite{KAMEYAMA2003PART2}, \cite{MEYER2009EXPANDING}, \cite{MEYER2013INVARIANT}, and \cite{MEYER2014UNMATING} for ease of reference.  Some items are highlighted here. 

We will use $\mathbb{S}^2$ to refer to the two-sphere, but $\widehat{\mathbb{C}}$ to refer to the two-sphere when endowed with a Riemannian metric and structure. 

We occasionally identify $\mathbb{S}^1$ with $\partial{\mathbb{D}}=\{z\in\mathbb{C}:|z|=1\}$ where convenient. Most frequently $\mathbb{S}^1$ will be used to parametrize closed curves however, and our preference in these instances will be to identify $\mathbb{R}/\mathbb{Z}$ with $\mathbb{S}^1$ via the map $t\mapsto e^{2\pi it}$ and use parameters in the interval $[0,1)$. The ``$d$-fold map on $\mathbb{S}^1$ " is then taken to be the map $q_d(t):=dt$ mod 1 applied to this interval. 

When the word \emph{equator} is used in this text, it is taken to represent a curve separating hemispheres of $\mathbb{S}^2$. (This is mentioned as a point of disambiguation, as this word has a distinct mathematical interpretation in \cite{MEYER2014UNMATING}.)

The primary setting of the paper is where $f$ is taken to be an expanding Thurston map which is critically preperiodic. We will denote the degree of $f$ as $d$ and the postcritical set of $f$ as post, post$(f)$, or possibly $\textbf{V}^0$, depending on context.

If a homotopy $H:X\times[0,1]\rightarrow Y$ is an isotopy on $X\times[0,1)$, we will refer to $H$ as a pseudo-isotopy. We may use $H_t$ to refer to the function $H(\cdot,t)$, and generally construct $H_0$ as an identity map. If $H_t$ is constant on a set, we will refer to $H$ as a homotopy \emph{relative to} that set, and say that the homotopy is supported on its complement. 

We use $\bigvee$ to indicate the join of two equivalence relations.

\subsection{Expanding Thurston maps}\label{expanding}

We begin with some preliminary definitions.

\begin{definition} \emph{Thurston maps} are postcritically finite branched coverings of $\mathbb{S}^2$.    If $f,g: \mathbb{S}^2\rightarrow \mathbb{S}^2$ are two orientation-preserving branched coverings with postcritical sets $\text{post}(f)$ and $\text{post}(g)$, we say that $f$ and $g$ are \emph{Thurston equivalent} if and only if there exist orientation-preserving homeomorphisms $h, h': (\mathbb{S}^2,\text{post}(f))\rightarrow(\mathbb{S}^2,\text{post}(g))$ such that $g\circ h' = h\circ f$ and h is isotopic to $h'$ relative to $\text{post}(f)$. 
\end{definition}

In particular, polynomials and rational maps on $\widehat{\mathbb{C}}$ that are postcritically finite are Thurston maps. The Thurston maps we will focus on are rational maps with no periodic critical points. Such rational maps have Julia set equal to $\widehat{\mathbb{C}}$, and are also \emph{expanding} on $\mathbb{S}^2$ as described in \cite{BONK2017EXPANDING}:

\begin{definition}\label{ntiles} Suppose that $f$ is a Thurston map with critical set post:$=\text{post}(f)$, and that $\mathcal{C}\subseteq\mathbb{S}^2$ is a Jordan curve containing post. Note that we may generate a cellular decomposition of $\mathbb{S}^2$ by marking post as our 0-cell vertices, the arcs between these on $\mathcal{C}$ as 1-cell open edges, and the two components of $\mathbb{S}^2-\mathcal{C}$ as 2-cell open tiles. Further, we denote elements of $f^{-n}(post)$ as \emph{$n$-vertices}, the closures of components of $f^{-n}(\mathcal{C}-post)$ as \emph{$n$-edges}, closures of components of $f^{-n}(\mathbb{S}^2-\mathcal{C})$ as \emph{$n$-tiles}. 

We say that $f$ is \emph{expanding} if as $n\rightarrow \infty$ the n-tile diameters shrink to 0 uniformly with respect to some metric generating the topology on $\mathbb{S}^2$. \end{definition}

While the above condition is independent of the metric, much of our discussion hinges on usage of the \emph{visual metric}:

\begin{definition}: Let $f : \mathbb{S}^2\rightarrow\mathbb{S}^2$ be an expanding Thurston map, $\mathcal{C}\subset\mathbb{S}^2$ be a Jordan curve containing post, and $x,y\in\mathbb{S}^2$. If $x\neq y$, we assign $m(x,y)$ to be the largest $n\in\mathbb{N}$ for which there exist intersecting $n$-tiles $X$ and $Y$ with $x\in X$ and $y\in Y$. Otherwise, we set $m(x,y):=\infty$.

A metric $\varrho$ on $\mathbb{S}^2$ is then called a \emph{visual metric} (for f) if there exists a constant $\Lambda > 1$ such that
\begin{equation}\label{diffcurves}\displaystyle\frac{1}{C}\varrho(x, y)\leq \Lambda^{-m(x,y)}\leq C\varrho(x, y)\end{equation}
for all $x,y\in \mathbb{S}^2$, where the constant $C$ is independent of both $x$ and $y$. We call this $\Lambda$ the \emph{expansion factor} of $\varrho$. \cite{BONK2017EXPANDING}
\end{definition}
 
Bonk and Meyer note that visual metrics are guaranteed to exist for expanding Thurston maps, and that each visual metric induces the given topology on $\mathbb{S}^2$. Further, the $\mathcal{C}$ that we start with is not important: if we develop a different $\tilde{\mathcal{C}}$ containing post, we may obtain a similar statement as in \ref{diffcurves}, except with different $\tilde{m}$ and $\tilde{C}$. The expansion factor $\Lambda$, however, will remain the same for this curve. \cite[Prop. 8.3]{BONK2017EXPANDING}

Sets composed of $n$-tiles then have a natural relationship with balls in the visual metric:

\begin{lemma}\label{open}Let $f$ be an expanding Thurston map, and $\mathcal{C}\subset\mathbb{S}^2$ be a Jordan curve containing post. Let $\varrho$ be a visual metric for $f$ corresponding to this choice of $\mathcal{C}$ with expansion factor $\Lambda > 1$. For $x\in\mathbb{S}^2 ,n\in N_0$, set

$U^n(x) := \bigcup\{Y \in \textbf{X}^n :$ Y intersects an $n$-tile $X$ containing $x \}$.

Then there are constants $K \geq 1$ and $n_0 \in \mathbb{N}_0$ with the following property.

For all $x\in\mathbb{S}^2$ and all $n\in \mathbb{Z}$, $B_\varrho(x, r/K)\subset U^n(x)\subset B_\varrho(x, Kr)$,
where $r = \Lambda^{-n}$. \cite[Lemma 8.10]{BONK2017EXPANDING}
\end{lemma}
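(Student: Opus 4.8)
The plan is to reduce the geometric statement to a purely combinatorial comparison and then transport it to $\varrho$ using the defining inequalities \eqref{diffcurves}. Concretely, I would isolate the set relation
\[
\{\,y\in\mathbb{S}^2 : m(x,y)\ge n+M\,\}\ \subseteq\ U^n(x)\ \subseteq\ \{\,y\in\mathbb{S}^2 : m(x,y)\ge n\,\},
\]
valid for all $x\in\mathbb{S}^2$ and all $n$ past some threshold, where $M\in\mathbb{N}_0$ depends only on $f$ and $\mathcal{C}$; when $\mathcal{C}$ is $f$-invariant (so that the tilings $\mathbf{X}^{n+1}$ refine $\mathbf{X}^n$) one may take $M=0$ and the inclusions collapse to the single identity $U^n(x)=\{y:m(x,y)\ge n\}$.

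The right-hand inclusion is immediate: if $y\in U^n(x)$ there is, by construction, an $n$-tile $Y\ni y$ meeting an $n$-tile $X\ni x$, and these witness $m(x,y)\ge n$. For the left-hand inclusion, suppose $m(x,y)\ge n+M$, witnessed by intersecting $k$-tiles $X'\ni x$ and $Y'\ni y$ with $k\ge n+M$. If $\mathcal{C}$ is $f$-invariant, the unique $n$-tiles $X\supseteq X'$ and $Y\supseteq Y'$ still intersect and contain $x$, $y$, whence $Y\subseteq U^n(x)$; in general one instead chases a uniformly bounded chain of tiles, using the combinatorial lemmas on intersections of tiles of different generations in \cite{BONK2017EXPANDING} to place $Y'$, and hence $y$, inside $U^n(x)$. \emph{This step is the main obstacle} — everything after it is mechanical, but producing the constant $M$ and the chaining argument is where the structure theory of the cell decompositions of an expanding Thurston map is actually needed. (Finiteness of $m(x,y)$ for $x\ne y$ is a separate, easy consequence of expansion and is not required here.)

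With the set relation in hand I would rewrite \eqref{diffcurves} as $\tfrac1C\Lambda^{-m(x,y)}\le\varrho(x,y)\le C\Lambda^{-m(x,y)}$ and set $r=\Lambda^{-n}$. If $y\in U^n(x)$ then $m(x,y)\ge n$, so $\varrho(x,y)\le C\Lambda^{-n}=Cr$, giving $U^n(x)\subseteq\overline{B}_\varrho(x,Cr)$. If $y\notin U^n(x)$ then $m(x,y)\le n+M-1$ (as $m$ is integer-valued), so $\varrho(x,y)\ge\tfrac1C\Lambda^{-(n+M-1)}=\tfrac{\Lambda^{1-M}}{C}\,r$, giving $B_\varrho(x,\tfrac{\Lambda^{1-M}}{C}r)\subseteq U^n(x)$. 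Choosing any $K>\max\{1,\,C,\,C\Lambda^{M-1}\}$ then yields $B_\varrho(x,r/K)\subseteq U^n(x)\subseteq B_\varrho(x,Kr)$ for every $n$ past the threshold of the previous step; one takes $n_0$ to be that threshold and, if needed, enlarges $K$ once more so that $K\Lambda^{-n}\ge\operatorname{diam}_\varrho(\mathbb{S}^2)$ for the finitely many remaining levels $0\le n<n_0$ (for $n<0$, $U^n(x)=\mathbb{S}^2$ and $r\ge 1$, so nothing new is needed). The resulting $K$ and $n_0$ are independent of $x$, which completes the argument.
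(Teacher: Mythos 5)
The paper never proves this lemma --- it is quoted verbatim (with some transcription noise, e.g.\ ``for all $n\in\mathbb{Z}$'') from Bonk--Meyer, so the only proof to compare against is theirs. Your skeleton is the right one and matches that argument in outline: the inclusion $U^n(x)\subseteq\{y:m(x,y)\ge n\}$ is indeed immediate from the definitions, the transport through $\tfrac1C\Lambda^{-m(x,y)}\le\varrho(x,y)\le C\Lambda^{-m(x,y)}$ with $K>\max\{1,C,C\Lambda^{M-1}\}$ is mechanical and correct, and your remark that an $f$-invariant $\mathcal{C}$ makes the decompositions refine so that $M=0$ works is also correct.

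The genuine gap is exactly the step you flag and then wave at: the existence of a constant $M$, \emph{independent of $n$ and $x$}, with $\{y:m(x,y)\ge n+M\}\subseteq U^n(x)$. For a non-invariant $\mathcal{C}$ a tile of level $k>n$ need not be contained in any $n$-tile, so the intersecting deep tiles witnessing $m(x,y)\ge n+M$ give no level-$n$ witnesses directly; and ``chasing a uniformly bounded chain of tiles'' across generations is not the mechanism that works --- a naive chain through intermediate levels gives a bound that depends on $n$. What is actually needed (and what Bonk--Meyer prove and use) is the flower lemma: there is $k_0$, uniform in $n$, such that every tile of level at least $n+k_0$ is contained in some $n$-flower $W^n(v)$; combined with the elementary observation that $W^n(v)\subseteq U^n(x)$ whenever $x\in W^n(v)$ (every $n$-tile making up the flower contains $v$, as does an $n$-tile containing $x$, so they meet at $v$), this yields your inclusion with $M=k_0$. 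The uniformity of $k_0$ in $n$ is precisely where the expansion hypothesis enters quantitatively, and since everything else in your write-up is routine, this unproved statement is essentially the whole content of the lemma. Two smaller points: once that lemma is in hand no threshold $n_0$ is needed for the combinatorial inclusion; and for any finitely many small $n$ you do treat separately, enlarging $K$ only trivializes the outer inclusion --- the inner one needs the separate (easy) remark that for fixed $n$ disjoint $n$-cells are at positive distance, so $B_\varrho(x,\delta_n)\subseteq U^n(x)$ for some $\delta_n>0$ independent of $x$.
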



\subsection{Matings}\label{matings}
The work in this paper will emphasize matings of monic polynomials which are critically preperiodic in $\mathcal{C}$. Any such polynomial has an associated Julia set which is a connected and locally connected dendrite--that is, the filled Julia set $K$ has no interior and is the same as the Julia set $J$ for the polynomial.
 
When $K$ is connected, there exists a unique conformal isomorphism $\phi:\hat{\mathbb{C}}- \overline{\mathbb{D}}\rightarrow\hat{\mathbb{C}}- K$ for which $\phi (z^d) = f\circ \phi(z)$. We then let $R(t)$ denote the set of points forming \emph{external ray of angle $t$}; where $R(t):=\{\phi(re^{2\pi i t})\ |\ r\in(1,\infty)\}$. Since any filled Julia set $K$ we discuss here will be locally connected,  $\phi$ extends continuously to $\partial\mathbb{D}$ and thus external rays of angle $t$ are said to have a \emph{landing point}, which is given by $\gamma(t)=\displaystyle\lim_{r\rightarrow 1^+}\phi(re^{2\pi i t})$. The map $\sigma$ is called the \emph{Carath\'{e}odory semiconjugacy}, as we have that $\sigma(dt)=f(\sigma(t))$. In essence, this semiconjugacy relates the mapping behavior of $f$ on its Julia set to that of the $d$-fold map on the circle.

Let $\widetilde{\mathbb{C}}$ be the compactification of $\mathbb{C}$ formed by union with the circle at infinity, $\widetilde{\mathbb{C}}=\mathbb{C}\cup\{\infty\cdot e^{2\pi i \theta}|\theta \in \mathbb{R}/\mathbb{Z}\}$. Then, we may take the \emph{closed external ray of angle $t$}, $\overline{R(t)}$, to be the closure of $R(t)$ in $\widetilde{\mathbb{C}}$. This closure is formed by including the landing point on the Julia set, $\gamma(t)$, and the limit point on the circle at infinity, $\displaystyle\lim_{r\rightarrow \infty}\phi(re^{2\pi i t})$.

We may now introduce three fundamental constructions: the formal, topological, and essential matings.

\begin{definition}Let $P_1:\widetilde{\mathbb{C}}_1\rightarrow\widetilde{\mathbb{C}}_1$ and $P_2:\widetilde{\mathbb{C}}_2\rightarrow\widetilde{\mathbb{C}}_2$ be postcritically finite monic degree $d$ polynomials taken on two disjoint copies of $\widetilde{\mathbb{C}}$, and let $\sim_f$ be the equivalence relation which identifies $\infty \cdot e^{2\pi i t}$ on $\widetilde{\mathbb{C}}_1$ with $\infty \cdot e^{-2\pi i t}$ on $\widetilde{\mathbb{C}}_2$ for all $t\in \mathbb{R}/\mathbb{Z}$. Then, the quotient space $\widetilde{\mathbb{C}}_1\bigsqcup \widetilde{\mathbb{C}}_2/\sim_f$ may be identified with $\mathbb{S}^2$. The map which descends to this quotient space is the \emph{formal mating} of $P_1$ and $P_2$. \end{definition}
 
It should be noted that the formal mating is well-defined and yields a continuous branched covering of $\mathbb{S}^2$ to itself.
 
\begin{definition} Let $\mathbb{S}^2$ denote the domain of the formal mating as above, and let $\sim_t$ be an equivalence relation on $\mathbb{S}^2$ where any points sharing a closed external ray are contained in the same equivalence class . Allowing the formal mating to descend to the quotient space $\mathbb{S}^2/\sim_t$ yields the \emph{topological mating} $P_1\upmodels P_2$.
\end{definition}

Note that a given ray equivalence class of $\sim_t$ may contain points from several external rays because pairs of closed external rays meet where identified by $\sim_f$, and also because multiple external rays may land at multiply accessible points of $J_1$ or $J_2$. This equivalence relation generates a quotient space that can alternately be viewed as resulting from gluing the filled Julia sets of $P_1$ and $P_2$ together along the boundaries. Since the polynomials we examine are critically preperiodic and possess (filled) Julia sets that are dendrites, $\mathbb{S}^2/\sim_t$ has the potential to be a very peculiar space. However, by Moore's theorem if no equivalence class of $\sim_t$ separates $\mathbb{S}^2$, then $\mathbb(S)^2/\sim_t$ is homeomorphic to $\mathbb{S}^2$. 

 
Moore's theorem is a powerful result, but it may be difficult to visualize the mapping behavior of the topological mating on $\mathbb{S}^2$. We will frequently make use of the \emph{essential} (or \emph{degenerate}) \emph{mating} $P_1\upmodels_eP_2$ instead, as it is somewhat of an intermediary between the formal and topological mating. The intuition behind its development in \cite{LEI1992MATINGS} is this: occasionally, the formal mating is not Thurston equivalent to a rational map due to Levy cycles which yield Thurston obstructions.  In essence, we collapse only the equivalence classes of $\sim_t$ which are needed to fix this problem, and make minor adjustments so that the map descending to the quotient remains a branched covering, as described below.

\begin{definition}\label{essential} Allow $h:\mathbb{S}^2\rightarrow\mathbb{S}^2$ to denote the formal mating of $P_1$ and $P_2$, and suppose that no point on the critical orbit of $h$ is in an equivalence class of $\sim_t$ which forms a closed loop.

Let $\{\tau_1,...,\tau_m\}$ be the set of equivalence classes of $\sim_t$ that (a) contain at least one point on the critical orbit of $h$ and (b) have an equivalence class of $\sim_t$ containing $\geq 2$ elements of $P_h$ in their forward orbit under $h$. for some n. Define $\sim_e$ as the refinement of $\sim_t$ whose only nontrivial equivalence classes are $\{\tau_1,...,\tau_m\}$. 

Let $\{V_1,...,V_n\}$ be a collection of open tubular neighborhoods (respectively) of $\{\tau_1,...,\tau_n\}$, selected so that distinct $V_i$ are disjoint and that each $V_i$ contains precisely the points on the critical orbit that $\tau_i$ does. Then, define $\{U_1,...,U_m\}$ to be the collection of connected components of $h^{-1}\displaystyle(\bigcup_{i=1}^n V_i)$ which do not intersect any $\tau_i$. 

Let $\pi: \mathbb{S}^2\rightarrow\mathbb{S}^2/\sim_e be$ the natural projection. We define $g: \mathbb{S}^2/\sim_e\rightarrow\mathbb{S}^2/\sim_e$ as follows: 
On each $\overline{U_i}$, define $g$ to be a local homeomorphism which is equal to $\pi \circ h\circ \pi^{-1}$ on $\partial U_i$. Set $g:= \pi \circ h\circ \pi^{-1}$ on the remainder of $\mathbb{S}^2/\sim_e$ . The map $g$ is the \emph{essential mating} of $P_1$ and $P_2$.
\end{definition}
 
Note that since none of the $\tau_i$ are loops, $\mathbb{S}^2/\sim_e$ is homeomorphic to $\mathbb{S}^2$. (We will thus simplify notation by treating $g$ as a self-map of $\mathbb{S}^2$.) As $h$ maps equivalence classes to equivalence classes,  $\pi\circ h\circ\pi^{-1}$ is well-defined on $\mathbb{S}^2$. With the alteration of this composition on each $U_i$ though, we ensure that $g$ does not map arcs to points. The result is a map which is Thurston equivalent to the topological and geometric matings--while retaining much of the relative simplicity in structure of the formal mating. In fact, when $\sim_t$ never identifies post-critical points of the formal mating, the formal and essential matings are the same map. 

\subsection{Laminations}

Recall that a set of the form $\widetilde{\mathbb{C}}_1\bigsqcup \widetilde{\mathbb{C}}_2/\sim_f \ \cong \mathbb{S}^2$ served as the domain of the formal mating. The canonical equator of this space contains a full set of equivalence class representatives of $\sim_t$. Further, restricting to this set, the topological mating is semiconjugate to the $d$-fold map on $\mathbb{S}^1$. As equivalence relations like this will be crucial to our discussion, we introduce a common tool for geometric representation of equivalence relations on $\mathbb{S}^1$: the geodesic lamination.

\begin{definition}Let $L$ be a set of geodesics (which we will call \emph{ leaves}) on the closed hyperbolic disk $\overline{\mathbb{D}}$ which are disjoint, except possibly at their endpoints. If $\bigcup L$ is closed, we call $L$ a \emph{lamination} on $\overline{\mathbb{D}}$. 

The closures of components of $\mathbb{D}-\bigcup L$ are called \emph{gaps}. If a gap has finite intersection with $\partial\mathbb{D}$, this gap is an \emph{ideal polygon}. 
\end{definition}

Laminations were introduced by Thurston as a tool for discussing the geometry and dynamics of polynomials, although 2-sided analogues of laminations have also been used to discuss matings and other maps \cite{THURSTON2009FAMILY} \cite{dastjerdi1991dynamics} \cite{TIMORIN2008EXTERNAL}. To draw a parallel to previous ideas, consider the Julia set $J$ of a postcritically finite polynomial of degree $d$: If any external rays land  at a multiply accessible point of $J$, we mark the corresponding angles for these rays on $\partial\mathbb{D}$, and take as traditional leaves the boundary of the convex hull between the points. This collection of leaves defines a lamination which, when collapsed along its leaves and filled-in ideal polygons, yields a topological model of the structure of $J$. 

\begin{remark}
While we will not reference them heavily in this manuscript, it should be noted that geodesic laminations are referenced heavily in the discussion of equivalence relations in \cite{MEYER2009EXPANDING}. For the convenience of the reader following along, we note that in that discussion, the text uses a somewhat nonstandard definition for ``leaves": these are taken to be the convex hull of a finite collection of points on the boundary of $\mathbb{D}$. 
\end{remark}
 
\subsection{Oriented curves and 2-color tilings}\label{oriented}

Like laminations, tilings are another structure frequently used to represent matings and their approximations. In particular, we will examine 2-color tilings that are associated with oriented curves on $\mathbb{S}^2$.

\begin{definition} We say that a closed curve $\mathcal{C}:\mathbb{S}^1\rightarrow\mathbb{S}^2$ with at most finitely many self-intersections is \emph{oriented} if $\mathcal{C}$ can be deformed to a simple closed curve  by a small perturbation--that is, there exists a continuous map $h:\mathbb{S}^1\times[0,1]\rightarrow\mathbb{S}^2$ such that $h(\mathbb{S}^1,0)=\mathcal{C}$, and $h(\theta,t)\neq h(\theta',t)$ whenever $\theta\neq\theta'$. The key implication here is that while such a curve may self-intersect, it is not allowed to ``self-cross." 

Kameyama refers to such distorted Jordan curves $h(\mathbb{S}^1,t), t\neq 0$ as \emph{unlacings} of $\mathcal{C}$. These are also analagous to Meyer's concept of \emph{geometric representation} as in \cite[Sec. 6]{MEYER2013INVARIANT}.

If desired, unlacings can be constructed so as to be pseudo-isotopic to the original $\mathcal{C}$ \cite[Defn 6.7]{KAMEYAMA2003PART2}. 
\end{definition}

A subtle point of note is that while $\mathcal{C}$ may have an \emph{orientation} implied by its parameterization, the distinction of being \emph{oriented} as above is meant to emphasize how arcs of $\mathcal{C}$ can be viewed as oriented with respect to the components of $\mathbb{S}^2 - \mathcal{C}$. We may view an oriented curve $\mathcal{C}$ as generating a cellular decomposition of $\mathbb{S}^2$ by marking points of self intersection (and possibly finitely many other point along $\mathcal{C}$)  as 0-cell vertices, arcs of $\mathcal{C}$ between these points as 1-cell edges, and the connected components of $\mathbb{S}^2-\mathcal{C}$ as 2-cell tiles.  If these marked points are elements of post for some Thurston map $f$, this is similar to the cellular decomposition discussed in Definition \ref{ntiles}--except here there are more than two tiles. We will denote the sets of closed vertices, edges, and tiles respectively as the set of \emph{$0$-vertices} $\textbf{V}^0$, set of \emph{$0$-edges} $\textbf{E}^0$, and set of \emph{$0$-tiles} $\textbf{X}^0$. (We similarly refer to the sets of pullbacks of these objects by $f^n$ respectively as $\textbf{V}^n$, $\textbf{E}^n$, and $\textbf{X}^n$. In other words, the $n$ in this context is meant to specify an iterate rather than a dimension.) 

If $\mathcal{C}$ is an oriented curve, then the 1-cell arcs surrounding a given tile are either all positively oriented to the tile (in which case we will color it white), or all negatively oriented to the tile (in which case we will color it black). There are no other options for tiles: note that an unlacing of $\mathcal{C}$ is colored similarly, the pseudo-isotopy deforming the unlacing to $\mathcal{C}$ induces this ``checkerboard” coloring, where every edge separates a white tile from a black one.

Any time that we refer to a tiling of $\mathbb{S}^2$ associated with an oriented curve $\mathcal{C}$, we will assume that the tiling was constructed in the above manner. Given such a tiling, we will define connections at vertices of the tiling as follows:

\begin{definition}\label{mark}Let $v$ be any self intersection point of $\mathcal{C}$, $\mathcal{C}'$ an unlacing of $\mathcal{C}$ for which $h:\mathbb{S}^2\times[0,1]\rightarrow\mathbb{S}^2$ is a pseudo-isotopy. Further, let $U(v)$ be some small neighborhood containing $v$ and each other point $u\in\mathcal{C}'$ for which $h(u,1)=v$. (We assume however that no other marked points of $\mathcal{C}$ are contained in this set.) If the deformation of any connected component of $U(v)-\mathcal{C}'$ via $h$ has multiple connected components, the (same-color) tiles containing these components are said to be \emph{connected at $v$}. 
\end{definition}

The intuition is to make note of which tiles an unlacing ``connects" at the vertex $v$. With additional labeling of structures, this satisfies the conditions of a connection per \cite{MEYER2013INVARIANT}.                                                                                                                                                                                                                                                                                                                                                                                                                                                                                                                                                                                               

\section{Unmating}\label{litreview}

We have previously noted that the topological mating is conjugate to the $d$-fold map on $\mathbb{S}^1$. Much of the decomposition theory of matings centers on whether such a semiconjugacy--or at least a curve with similar behavior under the $d$-fold map--exists for a given map $f$. We reproduce a number of definitions from \cite{KAMEYAMA2003PART2} to clarify the properties of this curve which are desired, and to give context for later results on the decomposition of expanding Thurston maps via algorithms detailed by Meyer.

\subsection{Fundamental results}

\begin{definition} Let $f$ be an expanding degree $d$ Thurston map with postcritical set post, and let $P_f^a$ denote the set of postcritical points which eventually map forward to a critically periodic cycle for $f$.  Recall $q_d:\mathbb{S}^1\rightarrow\mathbb{S}^1$ denotes the $d$-fold map given by $q_d(t)=td$ mod1 on the circle $\mathbb{S}^1\cong\mathbb{R}/mathbb{Z}\cong[0,1)$.

We say that  $J$ is \emph{S1-parametrizable} if there exists a continuous surjection $\phi:\mathbb{S}^1\rightarrow J$ so that $f\circ\phi=\phi \circ q_N$ for some $N\in\mathbb{N}$.


A closed curve $ \mathcal{C}: \mathbb{S}^1\rightarrow \mathbb{S}^2- P_f^a$ is said to be \emph{fully $f$-invariant up to homotopy} if there exists a closed curve $ \mathcal{C}_1: \mathbb{S}^1\rightarrow  f^{-1}(\mathcal{C})$ and homotopy $ h : \mathbb{S}^1 \times [0,1]\rightarrow \mathbb{S}^2- P_f^a$  from $\mathcal{C}$ to $\mathcal{C}_1$  with $(\text{post}- P_f^a)$ fixed so that $f\circ \mathcal{C}_1=\mathcal{C}\circ q_d$. In other words, we should be able to continuously deform $\mathcal{C}$ to its pullback relative to the postcritical set.
\end{definition}

We now summarize the key result of \cite{KAMEYAMA2003PART2} in the following theorem: 

\begin{theorem}[Kameyama] Suppose $f$ is an expanding Thurston map. If $\mathcal{C}:\mathbb{S}^1\rightarrow\mathbb{S}^2-P_f^a$ is an oriented closed curve which is  fully $f$-invariant up to homotopy, then $J$ is orientedly S1-parametrizable and $f^n$ is Thurston equivalent to the degenerate mating of topological polynomials for some $n \in \mathbb{N}$. 
\end{theorem}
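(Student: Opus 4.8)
The plan is to build the required parametrization $\phi\colon\mathbb{S}^1\to\mathbb{S}^2$ by iterating the pullback of $\mathcal{C}$ and invoking expansion to force convergence, and then to read two topological polynomials off the limiting pattern of identifications and recognize an iterate of $f$ as their degenerate mating.

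First I would erect a tower of pullbacks. Let $h^1\colon\mathbb{S}^1\times[0,1]\to\mathbb{S}^2-P_f^a$ be the given homotopy from $\mathcal{C}=\mathcal{C}_0$ to $\mathcal{C}_1\subseteq f^{-1}(\mathcal{C})$ with $f\circ\mathcal{C}_1=\mathcal{C}\circ q_d$. Since $h^1$ avoids $P_f^a$ and is held fixed on the finitely many parameters mapped into $\mathrm{post}-P_f^a$ — so that off a discrete set it lands where $f$ is an honest covering — the map $h^1\circ(q_d\times\mathrm{id})$ lifts through $f$ (extended over the stationary points by the constant lift) to a homotopy $h^2$ from $\mathcal{C}_1$ to a curve $\mathcal{C}_2\subseteq f^{-2}(\mathcal{C})$ with $f\circ\mathcal{C}_2=\mathcal{C}_1\circ q_d$. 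Iterating produces curves $\mathcal{C}_k\subseteq f^{-k}(\mathcal{C})$ and homotopies $h^k$ from $\mathcal{C}_{k-1}$ to $\mathcal{C}_k$ with $f\circ\mathcal{C}_k=\mathcal{C}_{k-1}\circ q_d$, each $h^k$ a lift of $h^{k-1}\circ(q_d\times\mathrm{id})$. After arranging, as one may, that each track of $h^1$ crosses the $0$-skeleton $\mathcal{C}$ at most $M$ times, every track of $h^k$ crosses the $(k-1)$-skeleton $f^{-(k-1)}(\mathcal{C})$ at most $M$ times and hence lies in a connected union of at most $M+1$ adjacent $(k-1)$-tiles. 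Because $f$ is expanding these have diameter $O(\Lambda^{-k})$ in the visual metric, whence $\varrho\bigl(\mathcal{C}_{k-1}(\theta),\mathcal{C}_k(\theta)\bigr)=O(\Lambda^{-k})$ uniformly in $\theta$; so $(\mathcal{C}_k)$ is uniformly Cauchy and converges to a continuous $\phi\colon\mathbb{S}^1\to\mathbb{S}^2$. Passing to the limit in $f\circ\mathcal{C}_k=\mathcal{C}_{k-1}\circ q_d$ gives $f\circ\phi=\phi\circ q_d$, and using that $\mathcal{C}$ is fully (not merely forward) invariant one checks that the $\mathcal{C}_k$ equidistribute over the skeletons $f^{-k}(\mathcal{C})$, so that $\phi$ is onto; thus $J=\mathbb{S}^2$ is $\mathbb{S}^1$-parametrizable with $N=d$.

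Next I would extract the combinatorics and reassemble a mating. The kernel relation $\approx$ on $\mathbb{S}^1$, $s\approx t\iff\phi(s)=\phi(t)$, is closed and $q_d$-invariant, and $\phi$ realizes $\mathbb{S}^2\cong\mathbb{S}^1/{\approx}$. Because $\mathcal{C}$ is \emph{oriented}, the checkerboard $2$-coloring of its tilings is consistent from one level to the next, so the white and black $k$-tiles assemble into two topological disks glued along the curve; in the limit this presents $\mathbb{S}^1$ as the equator of a formal-mating sphere and splits $\approx$ as the join of $\sim_f$ with two $q_d$-invariant sublaminations $L_1$ (recording white-side identifications) and $\overline{L_2}$ (black-side, i.e.\ $L_2$ conjugated by $t\mapsto-t$). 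Since $f$ is postcritically finite, the marked angles $\phi^{-1}(\mathrm{post})$ are eventually periodic, so $L_1$ and $L_2$ are the invariant laminations of postcritically finite degree-$d$ topological polynomials $P_1,P_2$, and $\mathbb{S}^1/{\approx}\cong\mathbb{S}^2$ is by construction the topological mating $P_1\upmodels P_2$. Collapsing only the ray classes that meet the critical orbit and applying the local modification of Definition~\ref{essential} then produces the degenerate mating; replacing $f$ by a suitable iterate $f^n$ — needed to synchronize the dynamics on the two hemispheres and to realize the $L_i$ by honest topological polynomials — one concludes that $f^n$ is Thurston equivalent to $P_1\upmodels_e P_2$.

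The main obstacle is this last step: converting the limiting identification pattern into \emph{honest} topological polynomials and certifying the Thurston equivalence with $f^n$. One must check that $\approx$ genuinely decomposes cleanly along the $2$-coloring into invariant laminations in Thurston's sense — each leaf and gap carried by $q_d$ onto a leaf or gap of the same color — determine exactly which iterate $n$ is forced and why, and keep correct account of the excluded set $P_f^a$, which the polynomials must carry but $\mathcal{C}$ deliberately avoids. By contrast, the convergence argument and the identification $\mathbb{S}^2\cong\mathbb{S}^1/{\approx}$ are essentially routine once the homotopy lifts are treated carefully through the branch points; it is the return trip from the abstract quotient to polynomial dynamics, together with the essential-mating bookkeeping of Definition~\ref{essential}, that carries the weight.
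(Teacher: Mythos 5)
Your overall strategy --- iterate the pullback of the invariant curve, use expansion and a visual metric to get uniform convergence to a parametrization $\phi$ with $f\circ\phi=\phi\circ q_d$, then split the kernel relation of $\phi$ along the checkerboard coloring into a white and a black lamination and reassemble a degenerate mating --- is the same route the literature takes; the convergence half is essentially Meyer's pseudo-isotopy--lifting argument that this paper reuses (Lemma \ref{douevenliftbro}, Theorem \ref{OG1}). Be aware, however, that the paper does not prove this statement at all: it is quoted from Kameyama \cite{KAMEYAMA2003PART2}, and the paper's own contribution (Section \ref{revised}) begins exactly where your sketch stops.

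That stopping point is a genuine gap, not a routine verification. Declaring that the marked angles are eventually periodic does not make $L_1$ and $L_2$ invariant laminations of postcritically finite topological polynomials, and it is precisely this step that carries the theorem. What the paper (following Meyer) does is extract finite combinatorial data --- preargument sets at the self-intersections of the pullback curve that fail to map homeomorphically --- check Poirier's conditions (c1)--(c7) to obtain critical portraits, invoke Poirier's realization theorem to produce $P_w$ and $P_b$, and then establish the chain of identities $\sim\;=\;\overset{_w}{\sim}\bigvee\overset{_b}{\sim}\;=\;\approx$ (Theorem 4.7 and Proposition 7.12 of \cite{MEYER2009EXPANDING}) to identify $q_d$ acting on $\mathbb{S}^1/\!\approx$ with the topological mating; the choice of the iterate $n$ and the essential-mating adjustment of Definition \ref{essential} are likewise argued rather than invoked. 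Without this, your construction yields an $S^1$-parametrization but not the mating conclusion. Two further points: (i) your surjectivity and convergence arguments silently assume $P_f^a=\varnothing$, i.e.\ $J=\mathbb{S}^2$, since they rely on Bonk--Meyer tiles shrinking everywhere; the theorem as stated allows critically periodic cycles, where $J$ is a proper subset, the curve must avoid $P_f^a$, and the resulting polynomials carry nonempty Fatou data ($\mathcal{F}\neq\varnothing$ in the critical portraits), so even the ``routine'' half needs modification there. (ii) Lifting the track homotopy $h^1\circ(q_d\times\mathrm{id})$ through the branched cover needs more than a ``constant lift at stationary points'': tracks of non-fixed parameters may cross critical values, where lifts exist but are not unique, which is why Meyer lifts pseudo-isotopies of the whole sphere relative to post --- the device this paper adopts in Theorem \ref{OG1}.
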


In this paper we emphasize the case where $f$ has only strictly preperiodic critical points and so as a result $P_f^a=\varnothing$ and $J$ is the sphere. In a series of papers  (\cite{MEYER2013INVARIANT},\cite{MEYER2009EXPANDING},\cite{MEYER2014UNMATING}), Meyer adds substantial detail in this setting by constructing a Jordan curve which is fully $f^n$-invariant up to homotopy, giving explicit instructions for using this curve to obtain S1-parameterization, and also demonstrating how to use the S1-parameterization to find labels for the polynomials in the unmating. We briefly summarize the unmating approach below, giving key sections for relevant discussion of each ``step":

\begin{enumerate}
\item Suppose that $f:\mathbb{S}^2\rightarrow \mathbb{S}^2$ is an expanding Thurston map without periodic critical points. A Jordan curve $\mathcal{C}\subseteq \mathbb{S}^2$ containing post is selected. For a sufficiently large $n\in\mathbb{N}$, there exists a Jordan curve $\mathcal{\gamma}^0$ which is invariant for $F=f^n$ and isotopic to $\mathcal{C}$ relative to post, thus $F$ can be viewed as the subdivision map for a 2-tile subdivision rule on $\mathbb{S}^2$. \cite[Sec. 15]{BONK2017EXPANDING},\cite{FLOYD2007SUBFROMRAT}. Connections at vertices are investigated to develop a pseudo-isotopy $H^0$ which deforms $\gamma^0$ to $\gamma^1=F^{-1}(\gamma^0)$ relative to post. See \cite[Sec. 6-8]{MEYER2013INVARIANT}. 
\item The connections at vertices are  marked to assist with appropriately parameterizing $\gamma$ in a later step. See \cite[Defn 6.4]{MEYER2013INVARIANT}.
\item Successive lifts via $F$ of the pseudo-isotopy $H^0$ are used to develop a sequence of closed curves, $\gamma^n$. This sequence converges uniformly (with respect to the visual metric for $F$) to a curve $\gamma$. See Sections 3-4 of \cite{MEYER2013INVARIANT}.
\item A re-parameterization $\gamma^0:\mathbb{S}^1\rightarrow \mathbb{S}^2$ is assigned to $\gamma^0$, so that $F$ is topologically semiconjugate to the $dn$-fold map on $\mathbb{S}^1$ via $\gamma^0$. With this assigned parameterization, $\gamma^0$ is an oriented curve which is fully $F$-invariant up to homotopy. The successive lifts of $\gamma^0$ induce a parameterization of $\gamma$, which allows it to serve as an S1-parameterization of $\mathbb{S}^2$. See \cite[Sec 4]{MEYER2013INVARIANT}, as well as \cite[Sec. 9]{MEYER2014UNMATING} for implementation. 
\item We consider an equivalence relation $\sim$ on $\mathbb{S}^1$ given by  $s\sim t \iff \gamma(s)=\gamma(t)$, along with several other equivalence relations induced by  $\gamma^0$. The equivalence relation $\sim$ can be interpreted in terms of limiting operations on the  $\sim_n$ relations, as well as in terms of a two-sided lamination. See \cite[Sec. 5-7]{MEYER2009EXPANDING}.
\item The two ``sides" of this lamination are generated by polynomials. Further, we can use the finite data from self intersections of $\gamma^1$ to generate critical portraits of the specific polynomials in the mating. See \cite[Sec. 7]{MEYER2009EXPANDING}, as well as \cite[Sec. 9]{MEYER2014UNMATING} for implementation. 

\end{enumerate}

While Kameyama and Meyer's results are very similar, there are a few key differences that are of interest to us. First, the unmating process is much more explicitly constructive than Kameyama's--to the tune of well over a hundred pages of additional detail! This comes at the cost that the initial setting considered by Meyer is more restrictive: the oriented fully $f$-invariant up to homotopy curve that Meyer initializes with is Jordan. Indeed, when a map possesses a \emph{pseudo-equator} (roughly speaking, a Jordan $\gamma^0$ and the pseudo-isotopy noted in step 2 above), it is a sufficient condition for $F$ to be a mating--but Meyer notes that this criterion is not a necessary one. For example, any topological mating with no postcritical-to-postcritical identifications has a pseudo-equator \cite{WILKERSON2016FSR}, but there are examples in \cite{MEYER2013INVARIANT} and \cite{MEYER2014UNMATING} of matings that do not have one. Of note is that both of these examples \emph{do} still possess curves which are fully $f$ invariant up to homotopy.  Kameyama's construction, which effectively take the equator curve in a degenerate mating (as in Proposition 6.6 of \cite{KAMEYAMA2003PART2})--does not necessitate that $\mathcal{C}$ is Jordan. In the event that $\sim$ contains at least one equivalence class with two postcritical points from the same polynomial, the equator curve is ``pinched" in the resulting quotient space for the degenerate mating, and is no longer a Jordan curve. 

\subsection{An example}
We consider the case of the example from \cite[Sec. 10]{MEYER2013INVARIANT}:

Combinatorial data for the two polynomials and their mating are noted in Figures \ref{polynomials} and \ref{FSRisamating}. In \ref{polynomials}, $f_{5/12}$ and $f_{1/2}$ reference two Misiurewicz polynomials that are parametrized by angles of external rays landing at the respective critical values for each polynomial. The critical orbit portraits are as follows:

$f_{5/12}$:
\begin{tikzcd}
c_1 \ar[r,"2"]  & p_1\ar[r] & p_3  \ar[r] & p_0\ar[loop] 
\end{tikzcd}

$f_{1/12}$:
\begin{tikzcd}
c_2 \ar[r,"2"]  & p_2^*\ar[r] & p_3^*  \ar[r] & p_0^*\ar[r] & p_0' \ar[l, bend right] 
\end{tikzcd}

\begin{figure}[htb]
\center{\includegraphics[width=5in]{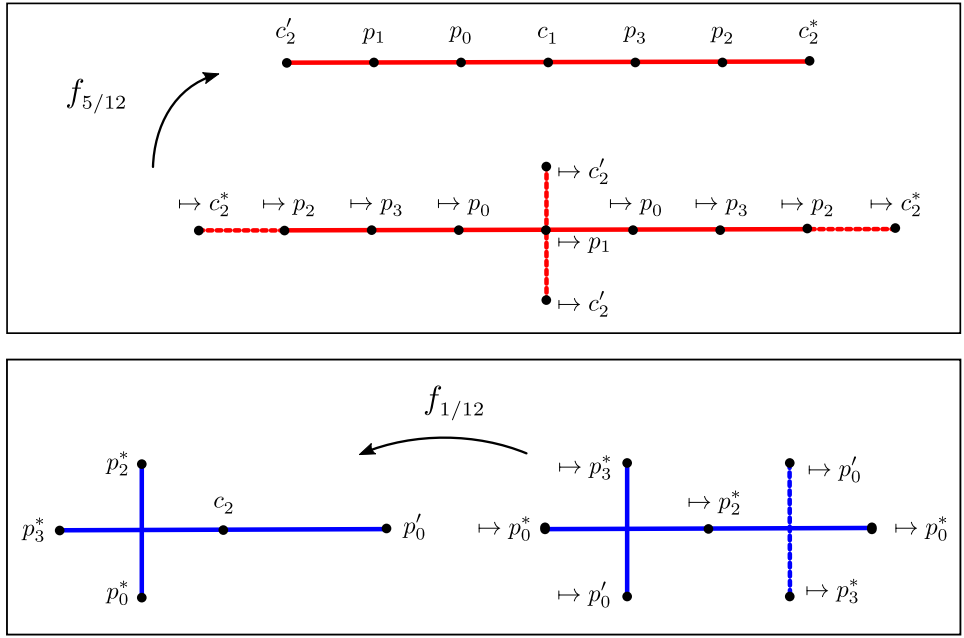}}
\caption{Combinatorial mapping data for two polynomials whose mating has no pseudo-equator. The Hubbard trees for $f_{5/12}$ and  $f_{1/12}$ are subsets of the above trees; additional edges and vertices have been marked to emphasize where postcritical identification will occur in the essential mating of these polynomials..} 
\label{polynomials}
\end{figure}

The graph structures noted in Figure \ref{polynomials} contain Hubbard trees for the polynomials $f_{5/12}$ and $f_{1/12}$, which give a combinatorial description of the mapping behavior of these polynomials. The postcritical points here are labeled to emphasize postcritical identifications forced by $\sim_e$: any points sharing a letter and subscript are collapsed in the topological and essential matings. We then default to using the relevant letter and subscript (sans other markings) to demonstrate the mapping behavior of the essential mating in Figure \ref{FSRisamating}.

Of note in Figure \ref{FSRisamating}: the dashed lines indicate a curve isotopic to $\mathbb{S}^1/\sim_e$ where $\mathbb{S}^1$ is the equator of the formal mating; along with its pullback. This is clearly no longer a Jordan curve, but there is a parameterization for the pullback of this curve which is orientation preserving and  pseudo-isotopic to the original relative to the postcritical set. (The dashed lines have been drawn slightly offset from the postcritical set to emphasize this.)

\begin{figure}[hbt]
\center{\includegraphics{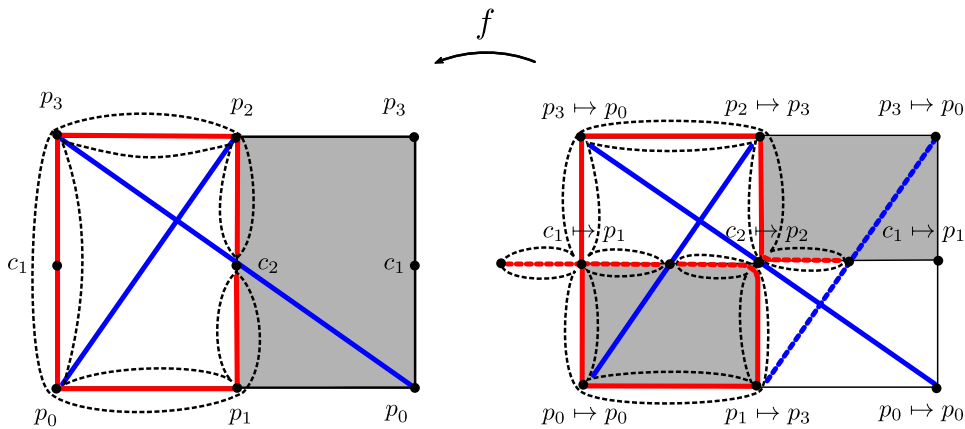}}
\caption{Meyer's example of a map with no pseudo-equator. The finite subdivision rule has been overlaid with 1-skeletons containing Hubbard trees for polynomials $f_{1/12}$ and $f_{5/12}$. This map is Thurston-equivalent to the essential mating of these polynomials.} 
\label{FSRisamating}
\end{figure}

\section{Revising the Unmating Construction}\label{revised}

Let  $f$ be an expanding Thurston map with no periodic critical points, so that $J$ is the sphere.

An early step in Meyer's unmating process is the construction of a pseudo-isotopy which deforms a Jordan curve $\mathcal{C}$ containing post into its pullback by an iterate of $f$. We present an outline of Meyer's unmating algorithm with considerations for changes to be made in light of initializing with a fully $f$-invariant up to homotopy curve which is not necessarily simple. We begin by highlighting an immediate point of concern, and using this as a tool to discuss our approach through the rest of this section.

One potential problem to reconcile is that the Jordan curve $\mathcal{C}$ is used to generate much of the topological structure on $\mathbb{S}^2$ in which we discuss notions of convergence. However, as noted earlier in subsection \ref{expanding},  the visual metric $\rho$ and expansion constant $\Lambda$ that are associated with this $\mathcal{C}$ are not constrained to only work with $\mathcal{C}$: any Jordan curve through post can be shown to have a similar relationship (up to a scaling factor) with $\rho$ and $\Lambda$ \cite[Prop 8.3]{BONK2017EXPANDING}. With this in mind, many arguments posed involving open neighborhoods still apply because these sets are still open in our setting; however they may not relate to the surrounding structures in the initially intended manner. For example, consider the following:

\begin{lemma}\label{existence} (existence of neighborhoods between balls \cite[Lemma 2.3]{MEYER2013INVARIANT})
Let $\varrho$ be a visual metric for $f$ with expansion factor $\lambda$. Then there exist $\varepsilon_0>0$ and some constant $K\geq1$ such that the following is true: 

Suppose $0<\varepsilon<\varepsilon_0$ and let $\mathcal{N}_\varrho(\textbf{V}^1,\varepsilon)$ be the $\varepsilon$-neighborhood of $\textbf{V}^1$. Then there exists some neighborhood $V^1$ of $\textbf{V}^1$ such that $\mathcal{N}_\varrho(\textbf{V}^1, \frac{\varepsilon}{K}) \subset V^1\subset \mathcal{N}_\varrho( \textbf{V}^1 ,\varepsilon )$.

Further, the set $V^{n+1}:=F^{-n}(V^ 1)$ satisfies 

$\mathcal{N}_\varrho(\textbf{V}^{n+1},\Lambda^{-n}\frac{\varepsilon}{K})\subset V^{n+1}\subset \mathcal{N}_\varrho(\textbf{V}^{n+1},\Lambda^{-n}\varepsilon)$ for all $n\in\mathbb{N}$.

\end{lemma}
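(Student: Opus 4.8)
The plan is to reduce everything to two ingredients: the comparison between $U^n(x)$ and metric balls from Lemma \ref{open}, and the fact that $F$ is a local homeomorphism scaling the visual metric by a factor $\Lambda$ at the appropriate scale. First I would establish the base case $n=0$ (i.e.\ the statement for $V^1$ itself). The set $\textbf{V}^1$ is finite, so I would choose $\varepsilon_0$ small enough that the balls $B_\varrho(v,\varepsilon_0)$ around distinct $1$-vertices $v$ are pairwise disjoint. For the inclusion I would take $V^1$ to be a union, over $v\in\textbf{V}^1$, of a set built from $1$-tiles (or $m$-tiles for a fixed large $m$ chosen so that $\Lambda^{-m}$ is comparable to $\varepsilon$) adapted to the scale $\varepsilon$: concretely, set $V^1 = \bigcup_{v\in\textbf{V}^1} U^{m(\varepsilon)}(v)$ where $m(\varepsilon)$ is the integer with $\Lambda^{-m(\varepsilon)} \asymp \varepsilon$. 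By Lemma \ref{open}, $B_\varrho(v,\Lambda^{-m(\varepsilon)}/K) \subset U^{m(\varepsilon)}(v) \subset B_\varrho(v,K\Lambda^{-m(\varepsilon)})$, which after adjusting constants gives the sandwiched inclusion $\mathcal{N}_\varrho(\textbf{V}^1,\varepsilon/K) \subset V^1 \subset \mathcal{N}_\varrho(\textbf{V}^1,\varepsilon)$ once $K$ is enlarged to absorb both the Lemma \ref{open} constant and the comparability constant in $\Lambda^{-m(\varepsilon)}\asymp\varepsilon$. The disjointness for small $\varepsilon$ ensures $V^1$ genuinely is a neighborhood of $\textbf{V}^1$ and not something larger.

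Next I would run the inductive step. Since $f$ (hence $F=f^n$) has no periodic critical points and is expanding, $F$ acts on $n$-tiles by mapping each $(k{+}1)$-tile homeomorphically onto a $k$-tile, and correspondingly $F^{-n}(\textbf{V}^1) = \textbf{V}^{n+1}$. The key metric fact, which I would cite from \cite{BONK2017EXPANDING} (it is implicit in the visual-metric machinery), is the scaling estimate: for points in a common $(n)$-tile, $\varrho(F^n x, F^n y) \asymp \Lambda^{n}\varrho(x,y)$, equivalently preimages under $F^n$ shrink diameters by $\Lambda^{-n}$ up to a uniform multiplicative constant. Applying $F^{-n}$ to the already-established inclusions for $V^1$ and tracking how neighborhoods transform — a neighborhood of radius $\rho$ around $\textbf{V}^1$ pulls back to something comparable to a neighborhood of radius $\Lambda^{-n}\rho$ around $\textbf{V}^{n+1}$ — yields $\mathcal{N}_\varrho(\textbf{V}^{n+1},\Lambda^{-n}\varepsilon/K) \subset V^{n+1} \subset \mathcal{N}_\varrho(\textbf{V}^{n+1},\Lambda^{-n}\varepsilon)$, possibly after one more enlargement of $K$ that is then fixed uniformly in $n$.

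I expect the main obstacle to be controlling constants uniformly, specifically making sure the single constant $K$ works simultaneously for the base inclusion, for the comparability $\Lambda^{-m(\varepsilon)}\asymp\varepsilon$, and for every pullback stage, rather than accumulating a factor that grows with $n$. The cleanest way around this is the observation that pulling back by $F^{-n}$ is, locally near each vertex, a branched-covering restriction whose distortion of the visual metric is bounded by a constant independent of $n$ (this is exactly the point of the visual metric and appears in \cite[Ch.~8]{BONK2017EXPANDING}); so the only place new constants enter is the single application of Lemma \ref{open} at level zero, and everything downstream inherits that same $K$. A secondary, more bookkeeping-level, obstacle is verifying that $V^1$ (and hence each $V^{n+1}$) is open and really is a neighborhood of the full vertex set — this follows because finite unions of $m$-tiles containing a given point have that point in their interior once we include all $m$-tiles meeting a tile containing it, which is precisely the definition of $U^m$. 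Since this lemma is quoted verbatim from \cite[Lemma 2.3]{MEYER2013INVARIANT}, I would keep the argument terse, emphasizing only the two structural inputs (Lemma \ref{open} and the $\Lambda^{-n}$-scaling of pullbacks) and referring to \cite{MEYER2013INVARIANT} and \cite{BONK2017EXPANDING} for the remaining routine estimates.
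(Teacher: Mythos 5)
Your proposal is a genuine proof, which is actually more than the paper attempts: Lemma \ref{existence} is quoted from \cite[Lemma 2.3]{MEYER2013INVARIANT}, and the paper's only contribution here is the observation that Meyer's proof transfers to the present setting, since its sole structural input is Lemma \ref{open}, whose sets $U^n(x)$ are built from the auxiliary Jordan curve $\mathcal{C}$ that defines the visual metric, not from the (possibly non-Jordan) curve $\gamma^0$ used by the algorithm; dropping the Jordan hypothesis on $\gamma^0$ therefore changes nothing. Within your reconstruction, the base case (take $V^1=\bigcup_{v\in\textbf{V}^1} U^{m(\varepsilon)}(v)$ with $\Lambda^{-m(\varepsilon)}\asymp\varepsilon$, sandwich with Lemma \ref{open}, shrink $\varepsilon_0$ so the pieces around distinct vertices are disjoint) is exactly the right construction. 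For the pullback step, though, I would steer you away from the two-sided estimate $\varrho(F^nx,F^ny)\asymp\Lambda^n\varrho(x,y)$ as stated: the expansion lower bound is standard, but the Lipschitz-type upper bound does not follow by naively iterating the one-step estimate (the constant would accumulate with $n$); you would have to argue through the combinatorial quantity $m(\cdot,\cdot)$ directly (images under $F^n$ of intersecting $k$-tiles are intersecting $(k-n)$-tiles, so $m(F^nx,F^ny)\ge m(x,y)-n$ in the relevant regime). Cleaner, and closer to Meyer's actual argument, is to avoid metric distortion of $F^{-n}$ altogether: $F^{-n}\bigl(U^{m}(v)\bigr)$ is a union of sets $U^{m+n}(w)$ over $w\in F^{-n}(v)\subset\textbf{V}^{n+1}$, because preimages of $m$-tiles are $(m+n)$-tiles and preimages of flowers are flowers; then one applies Lemma \ref{open} once more, at level $m+n$, where $\Lambda^{-(m+n)}\asymp\Lambda^{-n}\varepsilon$. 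This hands you the same constant $K$ at every stage with no accumulation, and it also disposes of your bookkeeping concern, since each component of $V^{n+1}$ visibly contains a vertex of $\textbf{V}^{n+1}$.
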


The above lemma is significantly dependent on \cite[Lemma 8.10]{BONK2017EXPANDING}, which we have reproduced here as Lemma \ref{open}. The $U^n(x)$ set referenced in Lemma \ref{open} is a union of $n$-tiles associated with a Jordan curve $\mathcal{C}$ through post. In Meyer's setting, the invariant curve $\gamma^0$ was being used with an iterate of $f$ for this $\mathcal{C}$, and this curve serves as the 1-skeleton for a 2-tile finite subdivision rule on $\mathbb{S}$. In other words, the $n$-tiles subdivide into $n+1$ tiles and the open set int $U^n(x)$ can be expressed neatly in terms of $n$-tiles of $\gamma^0$.

In our setting, we work with $f$ directly, and there may not even be a Jordan curve through post which is invariant with this map. We do assume, however that a Jordan $\mathcal{C}$ is chosen, and that the relevant $\varrho$ and expansion constant $\Lambda$ have been selected for our setting. Even though $\mathcal{C}$ doesn't generate a finite subdivision rule, we can still pull back the curve to generate $n$-tiles--so, $U^n(x)$ is still a meaningful object in this setting. However, our algorithm doesn't use $\mathcal{C}$ or any other Jordan curve as $\gamma^0$. The set $U^n(x)$ then will have nothing to do with our choice of $\gamma^0$ or its $n$-tiles, aside from living in the same metric space generated by $\rho$.

With that said, Lemma \ref{open} is only needed to obtain an open set of a particular size for Lemma \ref{existence}, and we do not in this instance require the combinatorial constraints of a finite subdivision rule--so existing in the same metric space is enough. Regardless of setting, int $U^n(x)$ can still function the desired open set, and so Lemma \ref{existence} still holds. 

We largely follow the constructions of Meyer outlined in the previous section, but for brevity assume that minor adjustments like the above are taken.  Our emphasis will be on places where the analagous constructions differ more significantly--however arguments and supporting lemmas will be summarized or reproduced to provide context. To assist the reader have tried to stay as close to notational conventions used in these papers as possible. Subsections here roughly align with the steps enumerated in the previous section. 

\subsection{Initializing with non-Jordan curves}
We begin by stating a key lemma from Meyer's construction to initialize our unmating.

\begin{lemma}\label{schonflies} (Isotopic Sch\"{o}nflies theorem \cite[Thm 5.1]{MEYER2013INVARIANT}) Let $\gamma, \sigma\subset \mathbb{D}$ be two Jordan arcs with common endpoints $p,q\in\overline{\mathbb{D}}$. Then, there is an isotopy of $\overline{\mathbb{D}}$ relative to $\partial \mathbb{D}\cup\{p,q\}$ that deforms $\gamma$ to $\sigma$. 
\end{lemma}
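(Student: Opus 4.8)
\emph{Proof strategy (proposal).} The plan is to first build an honest homeomorphism of $\overline{\mathbb{D}}$ carrying $\gamma$ to $\sigma$ while fixing $\partial\mathbb{D}$ and $\{p,q\}$, and then to promote it to an ambient isotopy by Alexander's trick. Both $\gamma$ and $\sigma$ are Jordan arcs with the same endpoints, so each of them separates $\overline{\mathbb{D}}$ into two closed Jordan domains; write these as $A_\gamma^{+},A_\gamma^{-}$ and $A_\sigma^{+},A_\sigma^{-}$, labelled compatibly (using the cyclic order that $p,q$ induce on $\partial\mathbb{D}$ when they lie there, or a fixed reference transversal otherwise). Fix once and for all an orientation-preserving homeomorphism $\phi\colon\gamma\to\sigma$ with $\phi(p)=p$ and $\phi(q)=q$.

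Next I would apply the classical Sch\"onflies theorem one side at a time: since $\partial A_\gamma^{+}$ and $\partial A_\sigma^{+}$ are Jordan curves in $\mathbb{S}^2$, a homeomorphism between them extends to a homeomorphism of the corresponding complementary disks. Choosing that boundary homeomorphism to be $\phi$ along $\gamma$ and the identity along $\partial\mathbb{D}\cap A_\gamma^{+}$ produces $h^{+}\colon A_\gamma^{+}\to A_\sigma^{+}$, and likewise $h^{-}$ on the other side. Because $h^{+}$ and $h^{-}$ agree on the common arc $\gamma$ (both restrict to $\phi$) and fix $p,q$, they glue to a homeomorphism $h\colon\overline{\mathbb{D}}\to\overline{\mathbb{D}}$ with $h|_{\partial\mathbb{D}}=\mathrm{id}$, $h(p)=p$, $h(q)=q$, and $h(\gamma)=\sigma$. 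Then Alexander's trick furnishes an isotopy of $\overline{\mathbb{D}}$ relative to $\partial\mathbb{D}$ from $\mathrm{id}$ to $h$; tracking the arc $\gamma$ along this isotopy deforms $\gamma$ to $h(\gamma)=\sigma$, which is the assertion. In the case $p,q\in\partial\mathbb{D}$ --- which is exactly the situation when $\gamma$ and $\sigma$ are edges of a tiling sharing vertices of $\textbf{V}^{n}$ --- the isotopy fixes $\partial\mathbb{D}$ pointwise and is therefore automatically relative to $\{p,q\}$.

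The step I expect to be the main obstacle is the compatibility of the two Sch\"onflies extensions along $\gamma$, together with control of the marked endpoints. On the overlap one must match not merely the setwise image of $\gamma$ but its parametrization and the local behaviour at $p$ and $q$, so the Sch\"onflies extensions should be normalized --- for instance using an auxiliary bicollar of $\gamma$ --- before being glued. In the full generality where $p$ or $q$ lies in the interior of $\mathbb{D}$ there is an additional subtlety: a priori the glued $h$, though isotopic to $\mathrm{id}$ relative to $\partial\mathbb{D}$, need only be so up to an element of the pure braid group $PB_2\cong\mathbb{Z}$ on $\{p,q\}$, so one must use the freedom in the two Sch\"onflies extensions --- equivalently, the triviality of the mapping class group rel boundary of the disk obtained by cutting $\overline{\mathbb{D}}$ along $\gamma$ --- to arrange that $h$ keeps $p$ and $q$ fixed throughout. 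I would therefore present the argument first in the boundary-endpoint case, where this issue does not occur, and then record the cutting-along-$\gamma$ reduction to handle interior endpoints.
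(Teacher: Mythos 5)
The paper itself gives no proof of this lemma at all: it is quoted from \cite[Thm 5.1]{MEYER2013INVARIANT}, so there is nothing internal to compare against, and what you have written is essentially the standard argument behind that citation. In the case $p,q\in\partial\mathbb{D}$ your proof is correct and complete: $\gamma$ together with each of the two arcs of $\partial\mathbb{D}\setminus\{p,q\}$ bounds a closed Jordan domain, the Sch\"onflies theorem extends the prescribed boundary homeomorphism (identity on $\partial\mathbb{D}$, a fixed $\phi\colon\gamma\to\sigma$ on $\gamma$) over each side, the two extensions glue because both restrict to $\phi$ on the common arc, and Alexander's trick finishes, with ``rel $\{p,q\}$'' automatic because those points lie on the fixed boundary. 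Note that your worry about matching parametrizations along $\gamma$ is vacuous for exactly this reason: the Sch\"onflies extension extends a \emph{given} boundary homeomorphism, so $h^{+}$ and $h^{-}$ agree with $\phi$ pointwise on $\gamma$ and no bicollar normalization is needed. Since in the application (the proof of Theorem \ref{OG1}, deforming edges inside closed tiles) the endpoints are vertices on the boundary of the relevant Jordan domain, this is the case that is actually used.

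The genuine gap is the interior-endpoint case, which the statement as printed ($p,q\in\overline{\mathbb{D}}$, isotopy rel $\partial\mathbb{D}\cup\{p,q\}$) does cover and which you explicitly defer. Two separate things break there, and only one of them is the braid-group indeterminacy you name. First, an arc with an endpoint in $\mathbb{D}$ does not separate $\overline{\mathbb{D}}$, so the decomposition into $A_\gamma^{\pm}$ --- the very first step --- does not exist; the ``cut along $\gamma$'' replacement is not a routine rerun of the two-sided picture (for two interior endpoints the cut surface is an annulus, not two disks) and would have to be carried out. Second, and more damaging to the stated conclusion, Alexander's trick only yields an isotopy rel $\partial\mathbb{D}$: even if $h(p)=p$ and $h(q)=q$, the straight-line Alexander isotopy moves interior points at intermediate times, so it is not relative to $\{p,q\}$, and adjusting $h$ afterwards by an element of the pure braid group does not by itself repair the intermediate stages. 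A cleaner route for interior endpoints is to bypass the Sch\"onflies gluing entirely: since $\overline{\mathbb{D}}$ is simply connected, $\gamma$ and $\sigma$ are homotopic rel $\{p,q\}$, and the standard surface-topology fact that homotopic embedded arcs are ambient isotopic rel endpoints, with the isotopy supported in a compact subset of $\mathbb{D}$ containing $\gamma\cup\sigma$, gives the isotopy rel $\partial\mathbb{D}\cup\{p,q\}$ directly. As written, your proposal proves the lemma only when $p,q\in\partial\mathbb{D}$.
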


\begin{theorem}\label{OG1}
Suppose that $f$ is an expanding degree $d$ Thurston map whose Julia set is $\mathbb{S}^2$, and let $\mathcal{\gamma^0}:\mathbb{S}^1\rightarrow\mathbb{S}^2$ be an oriented closed curve which is fully $f$-invariant up to homotopy. Then, there exists a pseudo-isotopy $H_0: \mathbb{S}^2 \times [0,1]\rightarrow \mathbb{S}^2$ satisfying the following properties:

(1) $H^0$ is a pseudo-isotopy relative to $V_0=$post.

(2) The set of all 0-edges is deformed by $H^0$ to the set of all 1-edges.  

(3) Only finitely many points of $\gamma^0$ are deformed by $H_0$ to any individual 1-vertex v. 

(4) Let $\varepsilon_0 >0$ be the constant from Lemma \ref{existence}, $0<\varepsilon<min\{\varepsilon_0, 1/2\}$ and $V^1$ be a neighborhood of $\textbf{V}^1$ also as in Lemma \ref{existence}. We will further require that $H^0: \mathbb{S}^2 \times[1-\varepsilon, 1]\rightarrow\mathbb{S}^2$ is supported on $V^1$. 
\end{theorem}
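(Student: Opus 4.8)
The plan is to construct $H^0$ in stages, each stage addressing one of the four requirements, and then compose or concatenate them. The starting point is the hypothesis that $\gamma^0$ is fully $f$-invariant up to homotopy: by definition there is a closed curve $\mathcal{C}_1:\mathbb{S}^1\to f^{-1}(\gamma^0)$ and a homotopy $h$ from $\gamma^0$ to $\mathcal{C}_1$ relative to $\mathrm{post}$ with $f\circ\mathcal{C}_1=\gamma^0\circ q_d$. The image $\mathcal{C}_1=\gamma^1$ is, up to reparametrization, the pullback curve, and since $\gamma^0$ is oriented one checks that $\gamma^1$ is oriented as well (its unlacing is the $f$-pullback of an unlacing of $\gamma^0$, using that $f$ is a branched cover that is injective on the postcritical-free part of each tile). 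First I would upgrade the ambient homotopy $h$ to a \emph{pseudo-isotopy} $\widetilde{H}^0:\mathbb{S}^2\times[0,1]\to\mathbb{S}^2$ relative to $\mathrm{post}$ realizing the same deformation of $\gamma^0$ to $\gamma^1$ on the level of curves; this is where Lemma \ref{schonflies} (the isotopic Sch\"onflies theorem) does the heavy lifting. Concretely, I would work tile by tile: a $0$-edge $e$ of $\gamma^0$ and the corresponding arc(s) of $\gamma^1$ lying in the closure of an unlacing of a $0$-tile are Jordan arcs with common endpoints in $\mathrm{post}$, sitting inside a disk, so Lemma \ref{schonflies} gives an isotopy of that disk, rel boundary and rel endpoints, carrying one to the other; patching these disk isotopies together over the cell structure (they agree on overlaps because they are all the identity near $\mathrm{post}$ and near the $0$-skeleton) yields $\widetilde H^0$ with properties (1) and (2) essentially built in.

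The subtlety that forces a \emph{pseudo}-isotopy rather than an honest isotopy — and the main point of departure from Meyer's Jordan-curve setting — is property (3) together with the non-simplicity of $\gamma^0$. Because $\gamma^0$ may self-intersect and because $\gamma^1$ may have \emph{more} self-intersections than $\gamma^0$ (new pinchings appear at $1$-vertices that are not $0$-vertices), the map $H^0_1=H^0(\cdot,1)$ cannot be a homeomorphism: it must collapse a small arc of $\gamma^0$ onto each such new pinch point. So in the final stage $t\to 1$ I would, near each $1$-vertex $v$, prescribe $H^0$ to collapse finitely many short sub-arcs of (the current image of) $\gamma^0$ to $v$ — this is exactly the data of the \emph{connections at vertices} from Definition \ref{mark}, read off from the unlacing of $\gamma^1$. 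Finiteness in (3) is then immediate: $\gamma^1=f^{-1}(\gamma^0)$ meets $f^{-1}(\mathrm{post})=\mathbf{V}^1$ in a finite set, each $1$-vertex has finite local degree, and $\gamma^0$ is a finite union of $0$-edges, so only finitely many sub-arcs of $\gamma^0$ map to any given $v$. Care is needed to keep $H^0$ an isotopy on $\mathbb{S}^2\times[0,1)$, i.e. to do all the collapsing only in the limit $t=1$; the standard device is to choose, for each $t<1$, a homeomorphism that is close to the identity near each $v$ but shrinks the relevant arcs to diameter $\sim(1-t)$, so that the arcs degenerate precisely at $t=1$.

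For property (4) I would insert a final time-reparametrized block on $[1-\varepsilon,1]$ that is supported on the neighborhood $V^1$ of $\mathbf{V}^1$ furnished by Lemma \ref{existence}. The idea is that all the ``interesting'' deformation near the $1$-vertices — the collapsing described above, plus any last small adjustments of $1$-edges near their endpoints — can be arranged to take place inside $V^1$: by Lemma \ref{existence}, $V^1$ contains an honest $\varrho$-neighborhood $\mathcal{N}_\varrho(\mathbf{V}^1,\varepsilon/K)$ of $\mathbf{V}^1$, which in turn (by Lemma \ref{open}, applied to the Jordan curve $\mathcal{C}$ fixed for the ambient metric $\varrho$, not to $\gamma^0$) contains a union of $N$-tiles around each $1$-vertex for $N$ large enough; these are genuine open sets in the metric space $(\mathbb{S}^2,\varrho)$ regardless of whether $\gamma^0$ is the curve generating the tiles, which is exactly the point emphasized in the discussion preceding Lemma \ref{existence}. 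Concretely, I would arrange that on $[0,1-\varepsilon]$ the pseudo-isotopy has already pushed all edges into their final homotopy class away from the vertices, and reserve $[1-\varepsilon,1]$ for the portion of the deformation localized near $\mathbf{V}^1$; rescaling that portion of the isotopy in time to occupy $[1-\varepsilon,1]$ and extending by the identity outside $V^1$ gives (4). Then $H^0$ is the concatenation of the two blocks, suitably smoothed at $t=1-\varepsilon$.

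The main obstacle I expect is a careful, honest treatment of the third stage: simultaneously (a) realizing the prescribed connections at \emph{all} self-intersection points of $\gamma^0$ and all new pinch points of $\gamma^1$, (b) keeping the intermediate maps homeomorphisms so that the pseudo-isotopy condition holds on $[0,1)$, and (c) not introducing any unwanted extra collapsing, all while staying compatible with the tile-by-tile Sch\"onflies patching from the first stage and with the localization-in-$V^1$ of the fourth stage. In Meyer's original Jordan-curve argument the connections are trivial, so this stage is vacuous; here it is the heart of the matter and the place where the proof genuinely differs. A secondary technical point is checking that the disk isotopies from Lemma \ref{schonflies}, which are only asserted to exist, can be chosen to agree on the shared $0$-edges so that they glue to a global map of $\mathbb{S}^2$ — this is routine (both are the identity on the $0$-skeleton) but should be stated.
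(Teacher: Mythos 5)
Your proposal is correct and follows essentially the same route as the paper: perturb the oriented curves to Jordan unlacings, build the main deformation by repeated application of the isotopic Sch\"onflies lemma (Lemma \ref{schonflies}), and finish with a pseudo-isotopy on $[1-\varepsilon,1]$, supported in $V^1$, that collapses arcs onto the $1$-vertices according to the connection data, from which (3) and (4) follow. The only detail the paper adds that you omit is the normalization (via Kameyama's Theorem 3.6) that $\gamma^0$ may be assumed to self-intersect only at points of post, which streamlines the bookkeeping but does not change the argument.
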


\begin{proof}
Without loss of generality, we may assume that $\gamma^0$ has at most finitely many self-intersections occurring only at points in post \cite[Thm 3.6]{KAMEYAMA2003PART2}. As such, we may assume that $\gamma^1=f^{-1}(\gamma^0)$ is an oriented closed curve with at most finitely many self-intersections as well, and construct $H^0$ as follows.

Since $\gamma^0$ is oriented, a small perturbation of this curve yields a Jordan curve $\gamma^{0'}$ which differs from $\gamma^0$ only on small neighborhoods surrounding its points of self-intersection. We obtain a similar result for $\gamma^1$, where a small perturbation on neighborhoods $U_1,...,U_n$ of intersection points yields the Jordan curve $\gamma^{1'}$. We may take $\gamma^{1'}$ to be an ``unlacing" of $\gamma^1$ which is homeomorphic to $\gamma^{0'}$ and which traverses the elements of post in the same cyclic order. Repeated application of Lemma \ref{schonflies} to corresponding boundary arcs of $\gamma^{0'}$ and $\gamma^{1'}$ yields an isotopy $H^a:\mathbb{S}^2\times[0,1-\varepsilon]\rightarrow \mathbb{S}^2$ that deforms $\gamma^{0'}$ into $\gamma^{1'}$. We may concatenate this isotopy with a pseudo-isotopy $H^b:\mathbb{S}^2\times[1-\varepsilon]\rightarrow\mathbb{S}^2$ that fixes post, but deforms the Jordan curve $\gamma^{1'}$ into $\gamma^1$. We will call this concatenation the pseudo-isotopy $H^0$.

Criteria (1) and (2) follow immediately from the construction. Note that the cycle of postcritical points visited by $\gamma^0$ is a subsequence of those visited by $\gamma^1$, which includes repeat visits to elements of post that are included at the very end of the pseudo-isotopy $H^0$. As  Lemma \ref{schonflies} is the machinery behind the pseudo-isotopy $H^b$, (3) follows. We may guarantee (4) with careful selection of both $\varepsilon$ and  a maximum allowable diameter for the sets $U_1,...,U_n$. 
\end{proof}

\begin{remark} It should be noted that the original construction of $H^0$ requires a 5th property regarding $f:H^0_1(\gamma^0)\rightarrow\gamma^0$ being a $d$-fold cover, however we start with a stronger condition in light of $\gamma^0$ being fully $f$-invariant up to homotopy: this implies by definition that $f\circ \gamma^1=\gamma^0 \circ q_d$. \end{remark} 

\begin{remark}Per the proof of \cite[Lemma 3.11]{MEYER2013INVARIANT}, we expect this $d$-fold cover property exactly when $f$ is orientation preserving. In light of this, applying the above construction with a Jordan $\gamma^0$ would satisfy conditions for what Meyer calls a \emph{pseudo-equator.} \end{remark}

\begin{remark}If a map has a pseudo-equator it is equivalent to a mating, but at the comparable point in Meyer's construction $F=f^n$ is used to pull back $\gamma^0$ to obtain $\gamma^1$, where we instead use $f$. This demonstrates that the iterate $f^n$ is a mating instead of $f$.
\end{remark}

\subsection{Marking connections at vertices}

As the $H^0$ of Theorem \ref{OG1} initially assumes $\gamma^0$ to be fully $f$-invariant up to homotopy, its relationship with $q_d$ yields that our parameterizations for $\gamma^0$ and $\gamma^1$ are implied. With this extra information, we have less to construct than in \cite{MEYER2013INVARIANT}: for Meyer there may be multiple oriented Eulerian paths along the set of points in $F^{-1}(\gamma^0)$--and one must be constructed and selected before constructing any sort of pseudo-isotopy.  Meyer's tile-centered construction thus eventually allows for the discovery of shared matings--however, these typically reflect an iterate of $f$ rather than $f$ itself.  

With that said, as long as the Eulerian paths determined by $\gamma^0$ and $\gamma^1$ are understood up to homotopy type, we can recover an S1-parameterization for $f$ via the methods described here with or without the assumed parameterizations. We just need to know the sequence of postcritical points visited along $\gamma^0$ (in order, and including return visits), can be viewed as a subsequence of the  postcritical points visited in order along the pseudo-isotopic pullback curve $\gamma^1$.

To do this requires an understanding of connections--and we must be particularly careful because $\gamma^0$ is not necessarily assumed to be Jordan. This means we need to note connections of both $\gamma^0$ \emph{and} $\gamma^1$ instead of just $\gamma^1.$ To help with this, we will adopt a new convention regarding \emph{marking} vertices at connections, which is singling out an arc on a curve in order to ``keep track" of where we expect a relevant postcritical point to be. (In figures of a geometric representation, Meyer typically denotes this with a dot on the relevant arc; we may name or label vertices and/or arcs as needed.) Meyer marks a single arc at each connection in his version of $\gamma^1$; we mark \emph{all} arcs in the geometric representation of $\gamma^0$. As elements of post are possible intersection points of $\gamma^0$, we  expect that multiple parameters may yield a visit to the same point--and we ``keep track" of the parameters accordingly. 

Note that considerations determining which vertices are connected and how markings are constructed, etc. doesn't really differ from in the unmating approach---we simply have more markings to keep track of. 

\subsection{The limit of approximating curves $\gamma^n$ }

We now develop a sequence of approximations to the desired S1-parameterization by taking lifts of our pseudo-isotopy. To do so, we recall a fundamental result for lifts of pseudo-isotopies:

\begin{lemma}\label{douevenliftbro} (Lifts of pseudo-isotopies \cite[Lemma 3.4]{MEYER2013INVARIANT}) Let $H: \mathbb{S}^2 \times[0, 1]\rightarrow\mathbb{S}^2$ be a pseudo-isotopy relative to post$=\textbf{V}^0$. Then $H$ can be lifted uniquely by $f$ to a pseudo-isotopy $\tilde{H}_1$ relative to $f^{-1}(\textbf{V}^0)=\textbf{V}^1$. This means that $f(\tilde{H}_1(x,t))=H(f(x),t)$ for all $x\in\mathbb{S}^2$ and all $t\in[0,1]$.

Further, let $H_n$ be the lift of $H_0$ by an iterate $F^n$. Then diam $H_n :=$ max $_{x\in\mathbb{S}^2}$ diam$\{H_n(x, t) : t \in [0, 1]\}$, and we have that diam $H_n\leq C\Lambda^{-n}$.

(Diameter here is measured with respect to the fixed visual metric $\varrho$ with expansion factor $\Lambda>1$. The constant $C$ is independent of $n$.) 
\end{lemma}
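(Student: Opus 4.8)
The statement to prove is Lemma \ref{douevenliftbro}, which has two parts: (i) a pseudo-isotopy $H$ relative to $\textbf{V}^0$ lifts uniquely by $f$ to a pseudo-isotopy $\tilde H_1$ relative to $\textbf{V}^1 = f^{-1}(\textbf{V}^0)$, with $f(\tilde H_1(x,t)) = H(f(x),t)$; and (ii) the $n$-th lift $H_n$ of $H_0$ by $F^n$ satisfies $\operatorname{diam} H_n \leq C\Lambda^{-n}$ for a constant $C$ independent of $n$.

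\medskip

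\textbf{Plan for part (i).} The plan is to treat the lift fiberwise and invoke covering-space theory off the branch points. Consider the map $\Phi \colon \mathbb{S}^2 \times [0,1] \to \mathbb{S}^2$ defined by $\Phi(x,t) = H(f(x),t)$; I want to lift this through $f$ in the first coordinate, i.e. find $\tilde H_1$ with $f \circ \tilde H_1 = \Phi$ and $\tilde H_1(\cdot, 0) = \mathrm{id}$. First I would restrict to $x \in \mathbb{S}^2 \setminus f^{-1}(\textbf{V}^0)$: there $f$ is a covering map onto $\mathbb{S}^2 \setminus \textbf{V}^0$, and for each fixed $x_0$ the path $t \mapsto H(f(x_0),t)$ starts at $f(x_0)$ and, since $H$ is relative to $\textbf{V}^0$, it stays in $\mathbb{S}^2 \setminus \textbf{V}^0$ for $t<1$ (the endpoint at $t=1$ may need separate care, but $H_1$ maps into $\mathbb{S}^2$ and the curve data is handled by continuity). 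By the path-lifting property there is a unique lift $t \mapsto \tilde H_1(x_0,t)$ starting at $x_0$; uniqueness of path lifts forces this to be well-defined, and continuous dependence on initial conditions (or the homotopy-lifting property applied to the family parametrized by $x_0$) gives joint continuity in $(x,t)$. Then I would extend across the branch points $\textbf{V}^1$: each $v \in \textbf{V}^1$ has $f(v) \in \textbf{V}^0$ fixed by $H$, so setting $\tilde H_1(v,t) = v$ is forced by continuity and makes $\tilde H_1$ relative to $\textbf{V}^1$. Finally I would check $\tilde H_1$ is a pseudo-isotopy: that $\tilde H_1(\cdot,0)=\mathrm{id}$ (by construction and uniqueness), that each $\tilde H_1(\cdot,t)$ for $t<1$ is a homeomorphism (it is a lift by the covering $f$ of the homeomorphism $H_t$ composed appropriately — more precisely $f \circ \tilde H_1(\cdot,t) = H_t \circ f$ with $H_t$ a homeomorphism, so $\tilde H_1(\cdot,t)$ is a bijective local homeomorphism, hence a homeomorphism of the compact sphere), and that $\tilde H_1(\cdot,1)$ is at least continuous. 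Uniqueness of the whole $\tilde H_1$ follows from uniqueness of path lifts at each point.

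\medskip

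\textbf{Plan for part (ii).} For the diameter estimate I would proceed inductively. Write $H_{n} = \widetilde{(H_{n-1})}_1$, the single lift of $H_{n-1}$ by $f$ (so $H_n$ is the lift of $H_0$ by $F^n = f^n$). Fix $x \in \mathbb{S}^2$ and let $A_n(x) = \{H_n(x,t) : t \in [0,1]\}$ be the track of $x$; I want $\operatorname{diam}_\varrho A_n(x) \leq C\Lambda^{-n}$. The key relation is $f(H_n(x,t)) = H_{n-1}(f(x),t)$, so $f$ maps the track $A_n(x)$ \emph{onto} the track $A_{n-1}(f(x))$, and $A_n(x)$ is a connected set (continuous image of $[0,1]$) lying in a single fiber-lift of $A_{n-1}(f(x))$. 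The base case $n=0$: $\operatorname{diam} H_0 \leq C_0$ is just some finite constant since $H_0$ is a fixed continuous map on a compact space. The inductive step is where the expansion hypothesis enters: because $n$-tile diameters shrink like $\Lambda^{-n}$ in the visual metric, a connected set whose image under $f$ has diameter $\leq C\Lambda^{-(n-1)}$ must itself have diameter $\leq C'\Lambda^{-n}$ — this is the standard consequence that $f$ expands distances by a bounded factor on small connected sets, equivalently that components of $f^{-1}(\text{small connected set})$ are small. I would cite or reprove this using Lemma \ref{open}: a connected set of diameter $\lesssim \Lambda^{-(n-1)}$ is contained in some $U^{n-1}(y)$, and each component of its preimage under $f$ is contained in a $U^n$-type set of diameter $\lesssim \Lambda^{-n}$. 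Iterating from $n$ down to $0$ and summing (or just using that the single-step contraction factor $1/\Lambda < 1$) yields $\operatorname{diam} H_n \leq C\Lambda^{-n}$ with $C$ absorbing the base constant and the implied constants from Lemma \ref{open}; taking the max over $x$ gives the stated bound.

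\medskip

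\textbf{Main obstacle.} I expect the genuinely delicate point to be the behavior at the terminal time $t=1$ and at the branch points simultaneously. For $t<1$ everything is classical covering-space theory on $\mathbb{S}^2 \setminus \textbf{V}^0$; but $H$ is only a homotopy (not an isotopy) at $t=1$, and $\gamma^0$, $\gamma^1$ may pass through postcritical/vertex points with self-intersections, so one must verify that the fiberwise path lift still extends continuously to $t=1$ and that the resulting $\tilde H_1(\cdot,1)$ is continuous — this uses that $H$ is relative to $\textbf{V}^0$ so that no track of $H$ under $f$ ever crosses a critical value except possibly at the very endpoint, keeping the lift inside a single sheet up to $t=1$. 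The diameter estimate, by contrast, is essentially bookkeeping once the expansion property in the form of Lemma \ref{open} is invoked; the only care needed there is to make the constant $C$ genuinely uniform in $n$, which the multiplicative structure of the bound guarantees.
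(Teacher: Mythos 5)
The paper does not actually prove this lemma: it is quoted verbatim from \cite[Lemma 3.4]{MEYER2013INVARIANT}, and the only in-text remark is that the argument is insensitive to the choice of the curve $\mathcal{C}$ behind $\varrho,\Lambda$ and of the initial pseudo-isotopy. Measured against the cited argument, your part (i) is essentially the standard route: fiberwise path lifting over the unbranched covering $f\colon\mathbb{S}^2\setminus\textbf{V}^1\to\mathbb{S}^2\setminus\textbf{V}^0$, forced values $\tilde H_1(v,t)=v$ on $\textbf{V}^1$, and uniqueness from uniqueness of path lifts. Two points are thinner than they should be but are repairable: bijectivity of $\tilde H_1(\cdot,t)$ for $t<1$ is cleanest by lifting the inverse isotopy and invoking uniqueness (not just ``bijective local homeomorphism''), and the extension to $t=1$, which you correctly flag as delicate, is handled by noting that for fixed $x$ the accumulation set of $\tilde H_1(x,t)$ as $t\to1$ is connected and is mapped by $f$ to the single point $H(f(x),1)$, hence is one point of the finite fiber; an equicontinuity argument then gives continuity of the extended map.

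The genuine gap is in part (ii). The inductive step you propose --- ``a connected set whose image under $f$ has diameter $\le C\Lambda^{-(n-1)}$ has diameter $\le C'\Lambda^{-n}$'' --- carries a multiplicative constant $C'\ge C$ coming from the constant $K\ge1$ in Lemma \ref{open}, and iterating it $n$ times yields a bound of order $K^{n}\Lambda^{-n}$, not $C\Lambda^{-n}$; your closing assertion that ``the multiplicative structure of the bound guarantees'' uniformity in $n$ is exactly what fails. The constant must be made uniform by invoking the expansion \emph{once}, comparing level $0$ directly to level $n$: by compactness of $\mathbb{S}^2\times[0,1]$ there is an $M$, independent of the basepoint, such that every track $t\mapsto H_0(y,t)$ subdivides into at most $M$ subpaths each contained in a single $0$-flower (equivalently, a set of type $U^0$); each component of the $F^n$-preimage of such a set is an $n$-flower/$U^n$-type set of diameter $\le C_0\Lambda^{-n}$ by Lemma \ref{open}, so each subpath lifts under $F^n$ into a set of that size, and chaining the at most $M$ consecutive lifted pieces gives $\operatorname{diam} H_n\le MC_0\Lambda^{-n}$ with $M,C_0$ fixed once and for all. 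With your induction replaced by this one-shot covering-and-lifting argument, the proof matches the cited one.
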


The argument supporting this lemma is independent of the choice of curve $\mathcal{C}$ that $\varrho$ and $\Lambda$ are associated with, and also independent of the initial pseudoisotopy $H$. Using the $H^0$ of section \ref{OG1}, we may now repeatedly apply Lemma \ref{douevenliftbro} to obtain a sequence of unique pseudo-isotopies $\{H^n$ relative to $\textbf{V}^n\}$ deforming $\gamma^{n}$ into $\gamma^{n+1}$. 

We obtain the following analogue of \cite[Lemma 3.5]{MEYER2013INVARIANT}, which states how the properties of $H^0$ noted in Theorem \ref{OG1} convey to the pseudo-isotopies $\{H^n$ relative to $\textbf{V}^n\}$.

\begin{lemma}
Let $H^0$ be a pseudo-isotopy as in Theorem \ref{OG1}, and $H^n$ denote the lift of $H^0$ by $f^n$. The lifts $H^n$ possess the following properties:

(1) $H^0$ is a pseudo-isotopy relative to $\textbf{V}^n=f^{-n}(\text{post})$.

(2) The set of all $n$-edges is deformed by $H^n$ to the set of all $(n+1)$-edges.  

(3) Only finitely many points of $\gamma^n$ are deformed by $H^n$ to any individual $(n+1)$-vertex $v$. 

(4) Let $V_{\varepsilon}^1$ be the neighborhood of $\textbf{V}^1$ as in Theorem \ref{OG1}. Then $H^n$ is supported on the set $\textbf{V}^{n+1}:=f^{-n}(\textbf{V}^1)$, which is a neighborhood of $\textbf{V}^{n+1}$.

\end{lemma}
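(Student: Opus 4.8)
The plan is to deduce each of the four properties for the lift $H^n$ from the corresponding property of $H^0$ in Theorem \ref{OG1}, using the lifting relation $f^n(H^n(x,t)) = H^0(f^n(x),t)$ supplied by repeated application of Lemma \ref{douevenliftbro}. The key structural observation is that $f^n$ maps $n$-cells to $0$-cells cellularly (by definition of the pullback construction in Definition \ref{ntiles}), so each property of $H^0$ phrased in terms of $0$-cells pulls back to the analogous statement for $n$-cells. I would carry the four verifications out in order.

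For (1): since $H^0$ is a pseudo-isotopy relative to $\textbf{V}^0 = \text{post}$, Lemma \ref{douevenliftbro} already gives that the single lift $\tilde H_1$ is a pseudo-isotopy relative to $\textbf{V}^1 = f^{-1}(\textbf{V}^0)$; iterating $n$ times (each intermediate lift being a pseudo-isotopy relative to the appropriate $\textbf{V}^k$) yields that $H^n$ is a pseudo-isotopy relative to $\textbf{V}^n = f^{-n}(\text{post})$. For (2): an $n$-edge is by definition the closure of a component of $f^{-n}(\mathcal{C} - \text{post})$ — here with $\mathcal{C}$ replaced by the relevant skeleton — so $f^n$ carries the set of $n$-edges onto the set of $0$-edges and the set of $(n+1)$-edges onto the set of $1$-edges; combining this with the lifting identity and property (2) of $H^0$ (the set of $0$-edges is deformed to the set of $1$-edges), and using that $H^n$ respects the cell structure because it is a lift, gives that $H^n$ deforms the set of $n$-edges onto the set of $(n+1)$-edges. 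For (3): suppose infinitely many points of $\gamma^n$ were deformed by $H^n$ to a single $(n+1)$-vertex $v$; applying $f^n$ and using the lifting identity, their images form infinitely many points of $\gamma^0$ deformed by $H^0$ to the $1$-vertex $f^n(v)$, contradicting property (3) of $H^0$ (here one uses that $f^n$ is finite-to-one, so an infinite set of points of $\gamma^n$ maps to an infinite set of points of $\gamma^0$, possibly after restricting to a fiber). For (4): by construction $V^{n+1} := f^{-n}(V^1)$ and, since $H^0$ on $[1-\varepsilon,1]$ is supported on $V^1$, the lifting relation forces $H^n$ on $[1-\varepsilon,1]$ to be supported on $f^{-n}(V^1) = V^{n+1}$; that $V^{n+1}$ is a neighborhood of $\textbf{V}^{n+1}$ follows from Lemma \ref{existence}, which gives the two-sided containment $\mathcal{N}_\varrho(\textbf{V}^{n+1}, \Lambda^{-n}\varepsilon/K) \subset V^{n+1} \subset \mathcal{N}_\varrho(\textbf{V}^{n+1}, \Lambda^{-n}\varepsilon)$.

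I expect the main obstacle to be property (3), since it is the one place where "the analogous statement downstairs implies the statement upstairs" is not purely formal: one must argue carefully that the preimage under $f^n$ of a set of points converging to (or equal to) a single $1$-vertex, intersected with the branch behavior of $f^n$, genuinely spreads across only finitely many $(n+1)$-vertices and that finiteness is preserved in each fiber. Everything else is bookkeeping that transfers cellwise through the lift, and the only subtlety there is keeping straight that $H^n$ denotes the $f^n$-lift of $H^0$ (not the $f$-lift of $H^{n-1}$ independently constructed), which is exactly the uniqueness clause of Lemma \ref{douevenliftbro}. Since this lemma is essentially the non-Jordan analogue of \cite[Lemma 3.5]{MEYER2013INVARIANT}, I would also note explicitly that no step used that $\gamma^0$ is simple — only that it is oriented with finitely many self-intersections at postcritical points, which is all that Theorem \ref{OG1} assumed.
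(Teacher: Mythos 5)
There is a genuine gap, and it sits exactly where you declare the argument to be purely formal: property (2). The lifting identity $f^n\circ H^n(\cdot,t)=H^0(\cdot,t)\circ f^n$ only yields the inclusion $H^n(\gamma^n,1)\subseteq f^{-n}\bigl(H^0(\gamma^0,1)\bigr)=f^{-n}(\gamma^1)=\gamma^{n+1}$, i.e.\ that $n$-edges are deformed \emph{into} the union of $(n+1)$-edges. The actual content of (2) is the reverse containment: every $(n+1)$-edge is covered by the deformation of some $n$-edge. Since $H^n(\cdot,1)$ is only the endpoint of a pseudo-isotopy (it may collapse arcs to points, and is in no way injective on fibers of $f^n$), your phrase ``$H^n$ respects the cell structure because it is a lift'' does not deliver this surjectivity. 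In Meyer's \cite[Lemma 3.5]{MEYER2013INVARIANT} this step is a counting argument: the deformation of each $n$-edge is a lift under $f^n$ of the deformed $0$-edge path, lifts of a path with distinct starting points have distinct endpoints, and this lets one enumerate the lifts and match their number against the number of $(n+1)$-edges. The paper's proof flags precisely this step as the one that cannot be copied verbatim here, because $\gamma^0$ is not simple: its self-intersections occur at postcritical vertices, so distinct $0$-edges may share initial points and the lifts can no longer be indexed by starting points of whole edges; the repair is to pass to sub-arcs of the $0$-edges chosen to have unique initial points and index the lifts by those. Consequently your closing claim that ``no step used that $\gamma^0$ is simple'' is exactly backwards for (2) --- this is the one statement where the non-Jordan hypothesis forces a modification of Meyer's argument.

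By contrast, item (3), which you single out as the main obstacle, is the easy transfer, and your fiber argument for it is correct: infinitely many points of $\gamma^n$ deformed to a single $(n+1)$-vertex $v$ would push forward under the finite-to-one map $f^n$ to infinitely many points of $\gamma^0$ deformed by $H^0$ to the $1$-vertex $f^n(v)\in\textbf{V}^1$, contradicting Theorem \ref{OG1}(3). Items (1) and (4) are likewise fine; for (4) you should just make explicit that if $f^n(x)\notin V^1$ then $t\mapsto f^n\bigl(H^n(x,t)\bigr)$ is constant on $[1-\varepsilon,1]$, so the continuous path $t\mapsto H^n(x,t)$ lies in the discrete fiber $f^{-n}\bigl(f^n(x)\bigr)$ and is therefore constant. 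To repair the proposal, replace the appeal to ``cell structure'' in (2) by the lift-counting argument of \cite[Lemma 3.5]{MEYER2013INVARIANT}, adapted via the sub-arc device described above, which is precisely the adjustment the paper's own proof makes.
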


\begin{proof}
This largely follows in an identical manner to the analogous Lemma in \cite[Lemma 3.5]{MEYER2013INVARIANT}. Statement (2) requires a small amount of additional bookkeeping since it is possible for more than one 0-edge in our setting to start at the same point: Meyer uses the fact that lifts of paths with distinct starting points have distinct endpoints, and uses this to count lifts of edges. We cannot precisely replicate this argument as some distinct 0-edges share initial points. Sub-arcs contained in these 0-edges could be constructed to have unique initial points though--and so we can still appropriately index the necessary lifts to guarantee an appropriate number of $n+1$ edges. 
\end{proof}

\subsection{Parameterization of $\gamma$}

In this subsection we outline the procedures in \cite[Sec. 4]{MEYER2013INVARIANT}, \cite[Sec. 9]{MEYER2014UNMATING} for context, as they are largely unaltered in our setting.

Note that if we know which values of $t$ yield the points in $\textbf{V}^n$ that we may pull back to obtain the parameters for the elements of $\textbf{V}^{n+1}$, and so on as these have been appropriately marked on $\gamma^{n+1}$. Taking the $\gamma^n$ as any orientation-preserving homeomorphism on the remaining arcs, we have that $\gamma: \mathbb{S}^1\rightarrow \mathbb{S}^2$ yields an oriented S1-parameterization of $\mathbb{S}^2$ when defined so that $\gamma(t):=\displaystyle\lim_{n\rightarrow\infty}\gamma^n(t)$. 

The beauty in the argument is that we obtain this parameterization starting with finite data—we need only start with parameters for the elements in post. The parameters for $\textbf{V}^0=$post separate $\mathbb{S}^1$ into a collection of subarcs. Since we expect $f\circ\gamma(t) =\gamma(d\cdot t)$, we know which subarcs map onto others; and can develop an edge transition matrix to represent this transition. An extension of the Perron-Frobenius theorem guarantees a unique positive eigenvalue equal to the spectral radius of this matrix, which in this setting is the degree of $f$. This eigenvalue is simple, and has an eigenvector with all positive values, representing respective lengths of each of the arcs on $\mathbb{S}^1$ (after a suitable scaling for overall unit length). 

Knowing the lengths of the respective arcs now means that given a parameter $t$ for any postcritical point $p$, we have two ways of obtaining the parameter for $f(\gamma(t))$: we can add the appropriate lengths of arcs to $t$ to find the new parameter, or we can multiply $t$ by $d$ to obtain the new parameter. This allows us to set up an equation to solve for $t$.

With the parameterizations above for $\gamma$ and each of the $\gamma^n$, we have that the sequence of curves $\{\gamma^n\}$ converge uniformly to $\gamma$ in the visual metric $\varrho$. Further, if desired, we could construct a pseudo-isotopy from $\gamma^0$ to $\gamma$ by modifying a concatenation of the sequence of homotopies $\{H^n\}$. 

\begin{remark} In our setting, we begin with $\gamma^0$ an oriented curve which is fully f-invariant; implying a given parameterization. However, we did not need to actually know this parameterization and could recover the parameters for the postcritical points without it using the above procedure. The only crucial difference in application is that with a self-intersecting $\gamma^0$, postcritical points may have been represented with multiple parameters—however we have offset this by our choice to make additional markings on $\gamma^0$ to compensate. The application of techniques in this section is otherwise identical.
\end{remark}

\subsection{Laminations and equivalence relations}

At this point, $\gamma$ and the sequence of $\gamma^n$ are used to generate several equivalence relations on $\mathbb{S}^1$:

\begin{itemize}
\item The equivalence relations  $\overset{_{n,w}}{\sim}$ and $\overset{_{n,b}}{\sim}$: Consider a self intersection point $v$ of $\gamma^n$. Let $\gamma^{n'}$ be an unlacing of $\gamma^n$, and $U(v)$ be a small neighborhood of $v$ as in Definition \ref{mark}. We now mark each arc of $U(v)\cap\gamma^{n'}$, assuming that these marks correspond to angles in $\mathbb{S}^1$ which $\gamma^n$ sends to $v$.  If any of the marked points lie on the boundary of the same white component of $U(v)-\gamma$, we say that their corresponding angles are in the same equivalence class of the equivalence relation $\overset{_{n,w}}{\sim}$. (We may similarly define an equivalence relation for points lying on the boundary of the same black component, $\overset{_{n,w}}{\sim}$.)

\item The closure of $\overset{_\infty}{\sim}$: We define the equivalence relations $\overset{_n}{\sim}$ so that $s\overset{_n}{\sim}$ if and only if $\gamma^n(s) = \gamma^n(t)$. (As the $\gamma^n$ are inductively defined, in a sense these can be interpreted as inductively defined from $\overset{_0}{\sim}$.) We then take $\overset{_\infty}{\sim}:=\bigvee\overset{_n}{\sim}$. The closure of this is an equivalence relation.

\item The equivalence relation $\sim$: We define $\sim$ so that $s\sim t$ if and only if $\gamma(s) = \gamma(t)$.

\item The equivalence relation $\overset{_w}{\sim} \bigvee \overset{_b}{\sim}$: We proceed similarly to above. Note that we have already defined the equivalence relations $\overset{_{n,w}}{\sim}$ and $\overset{_{n,b}}{\sim}$, and that these similarly can be interpreted as inductively defined from $\overset{_{0,w}}{\sim}$ and $\overset{_{0,b}}{\sim}$ respectively. We take $\overset{_w}{\sim}\ :=\ \bigvee\overset{_{n,w}}{\sim}$, and similar for $\overset{_{b}}{\sim}$. $\overset{_w}{\sim} \bigvee \overset{_b}{\sim}$ is an equivalence relation.

\item The equivalence relation $\approx\ :=\ \overset{_w}{\approx}\bigvee\overset{_b}{\approx}$: The equivalence relation $\overset{_{1,w}}{\sim}$ is taken to generate a \emph{critical portrait} for the white polynomial $P_w$. We take $\overset{_w}{\approx}$ to be the equivalence relation induced by the Carath\'{e}odory semiconjugacy $\sigma_w$, for $P_w$ which we discussed in Section \ref{matings}. In other words,  $s\sim t$ if and only if $\sigma_w(s) = \sigma_w(t)$. We similarly define $\overset{_b}{\approx}$.

\end{itemize}

Aside from a differing $\overset{_{0,w}}{\sim}$ and $\overset{_{0,b}}{\sim}$ in our setting, we take our equivalence relations to be defined and/or induced in the same respective manner.

In the setting of \cite{MEYER2009EXPANDING}, all of these equivalence relations (except for the $\overset{_{n,w}}{\sim}$ and $\overset{_{n,b}}{\sim}$) are noted to be equal. We first have that $\overset{_\infty}{\sim}=\sim$ as a result of a metric argument in Theorem 4.7.  We have that  $\sim=\overset{_w}{\sim} \bigvee \overset{_b}{\sim}$ via a similar metric argument along with an exercise in operations on equivalence relations. Proposition 7.12 guarantees that $\overset{_w}{\sim} \bigvee \overset{_b}{\sim}=\approx$ by noting that $\overset{_w}{\sim} =\overset{_w}{\approx}$ via an argument regarding the polynomial $P_w$ and Poirier's Theorem, and similar for the corresponding black equivalence relations. Further, $z^d/\approx$ is noted to generate a map topologically conjugate to the topological mating. In other words, this links Meyer's S1-parameterization to the topological mating of the two polynomials $P_w$ and $P_b$. 

Many of the arguments here rely on discussion of the underlying metric, and/or are exercises in properties of operations on equivalence relations. Since we are in the same metric space and have defined our equivalence relations similarly, these arguments do not change in a significant manner. The primary point of concern is that our $\overset{_{0,w}}{\sim}$ and $\overset{_{0,b}}{\sim}$, which induce the rest of these equivalence relations, are defined differently than in \cite{MEYER2009EXPANDING}. Meyer uses the nontrivial equivalence classes of $\overset{_{1,w}}{\sim}$ to generate a \emph{critical portrait} which yields the polynomial $P_w$; this is a critical step in defining $\overset{_w}{\approx}$ and asserting that this equivalence relation is equal to $\overset{_w}{\sim} $.

It thus remains to discuss what a \emph{critical portrait} is, and to show that our $\overset{_{1,w}}{\sim}$ and $\overset{_{1,b}}{\sim}$  relations can be used to define them.

\subsection{Critical portraits}

We define the following per \cite{poirier2009critical}:

\begin{definition}
We define that a set $A\subset \mathbb{S}^1$ is a \emph{degree $d$ preargument set} if $da = \{da : a \in A\}$ is a singleton set.

Further, consider a pair of families $\mathcal{F}= \{\mathcal{F}_1, . . . , \mathcal{F}_n\}$ (associated with critically periodic orbits) and $\mathcal{J} = \{\mathcal{J}_1, . . . , \mathcal{J}_m\}$ (associated with critically preperiodic orbits) of rational degree $d$ preargument sets. We say that $\Theta = (\mathcal{F} , \mathcal{J} )$ is a \emph{degree $d$ critical portrait} if the following conditions are fulfilled:

\begin{itemize}
\item[(c1)] $d-1=\displaystyle\sum(|\mathcal{F}_k|-1)+\sum(|\mathcal{J}_k|-1),$
\item[(c2)] $\mathcal{J}$ is weakly unlinked to $\mathcal{F}$ on the right,
\item[(c3)] each family is hierarchic, (That is, whenever a degree $d$ preargument set is mapped into from another preargument set via some $a\mapsto d^ia\in A$--then $d^ia\in A$ is the ``preferred" or representative element of $A$ that is mapped to by any preargument set.)
\item[(c4)] for any $a$ that participates in $\mathcal{F}$, some periodic forward iterate $d^ia$
also participates in $\mathcal{F}$
\item[(c5)] no $a$ that participates in $\mathcal{J}$ is periodic.
\item[(c6)] Suppose $a$ and $t$ are periodic with the same period and the same symbol sequences. If $a$ participates in $\mathcal{F}$, then $a = t$.
\item[(c7)]Let $a\in\mathcal{J}_l$ and $t \in\mathcal{J}_k$.Take $i\geq0$.If the left symbol sequence of $d^ia$ equals the left symbol sequence of $t$ then $d^ia = t$.
\end{itemize}
\end{definition}

Poirier's Theorem notes that critical portraits determine centered monic postcritically finite polynomials.

Meyer’s choice of Jordan $\gamma^0$ yields that any point of intersection on $\gamma^1$ is automatically a critical point, since a point of intersection does not map homomorphically to any arc of $\gamma^0$. The parameters associated with that point automatically generate appropriate preargument sets, which are used to form critical portraits for the relevant polynomials.

We cannot simply choose any point of intersection in $\gamma^1$ to yield a preimage set associated with a critical point though, as some of our intersection points on $\gamma^1$ may map homeomorphically onto intersection points of $\gamma^0$. Further, as some critical values are marked by multiple parameters, we need to ensure that we actually obtain proper preargument sets that do not contain too many elements.  Therefore:

\begin{definition}
We form markings of select vertices of $\textbf{V}^1$ as follows: 

\begin{itemize}

\item Note the elements of $\textbf{V}^1$ which do not map homeomorphically onto vertices of $\textbf{V}^0=post$. These vertices $c_1,…,c_k$ form our critical set.

\item Consider an unlacing $\gamma^{1'}$ of $\gamma^{1}$ in a small neighborhood $U(c)$ of each critical point $c$, and examine the components of $U(c)-\gamma^{1'}$. If any component does not map homeomorphically onto its image, note the color of this component, and partition the critical points by color.

\item Start with the white critical points. Select a critical point (again, call it c for now) that is not in the forward orbit of any other critical point of the same color. Select any marked parameter $\alpha$ for which $\gamma(\alpha)=c$. The marking for this critical point is now the collection of values in the set $q_d^{-1}(q_d(\alpha))$.

\item Iterate $\alpha$ in $q_d$ until we find a parameter for the next white critical point in the forward orbit. Apply $q_d^{-1}\circ q_d$ to this critical parameter to obtain a marking for this new critical point. Repeat this process to continue marking criticals point until we have exhausted critical points in the forward orbit of $c$.

\item If there are remaining critical points that eventually map into this critical orbit, select the parameter associated with this critical point that eventually iterates in $q_d$ to a previously noted parameter. Similarly apply $q_d^{-1}\circ q_d$ to this critical parameter to find a set of values to mark the critical point.

\item Repeat this process until all white critical points are exhausted. Then, apply this method to the set of black critical points. 
\end{itemize}

The collection of white markings (or preimage sets) yields the \emph{critical portrait} for the white polynomial $P_w$, and similar for the black polynomial $P_b$.
\end{definition}

\begin{theorem}
The above procedure generates a critical portrait for both the black and white polynomials. 
\end{theorem}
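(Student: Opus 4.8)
The plan is to verify conditions (c1)--(c7) of Poirier's critical portrait definition directly from the combinatorial data produced by the marking procedure, leaning heavily on the structure that $\gamma^0$ is fully $f$-invariant up to homotopy (so $f\circ\gamma^1 = \gamma^0\circ q_d$) and on the established equality of equivalence relations from the previous subsection. First I would set up notation: for each white critical point $c_i$, let $A_i$ denote its marking, and observe that by construction $A_i = q_d^{-1}(q_d(\alpha))$ for a marked parameter $\alpha$ with $\gamma(\alpha)=c_i$, so $q_d(A_i)$ is automatically a singleton --- this is exactly the preargument-set condition. The key translation to make is that the local non-homeomorphic behavior of $\gamma^{1'}$ on a component of $U(c_i)-\gamma^{1'}$ corresponds precisely to $c_i$ being a critical point of $f$ of some local degree, and that the number of marked parameters collapsing under $q_d$ equals that local degree; this is where the hypothesis that $\gamma^0$ is oriented and the unlacing/connection bookkeeping from the ``marking connections'' subsection does the real work.

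Next I would check (c1), the degree-counting identity $d-1 = \sum(|\mathcal{F}_k|-1) + \sum(|\mathcal{J}_k|-1)$: this should follow from the Riemann--Hurwitz relation for $f$ restricted to the white tiles, i.e. summing local degrees of the white critical points minus one gives $\deg(P_w)-1$, and since $\deg P_w = d$ (both polynomials have degree $d$ and the semiconjugacy to $q_d$ respects this), the identity holds. Conditions (c4) and (c5) --- that arguments participating in $\mathcal{F}$ have a periodic forward iterate in $\mathcal{F}$ and that no argument in $\mathcal{J}$ is periodic --- follow from the dichotomy built into the procedure (we partition criticals into those in critically periodic cycles versus strictly preperiodic ones, mirroring $P_f^a$; since here $f$ is critically preperiodic, in fact $\mathcal{F}$ may be empty, which simplifies matters considerably). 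Condition (c3), hierarchy, is essentially forced by the ``preferred element'' choice in the iteration step: when we iterate $\alpha$ forward under $q_d$ to reach the next critical parameter, that reached parameter is by fiat the representative of its preargument set.

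The conditions (c2), (c6), and (c7) --- weak unlinkedness on the right, and the symbol-sequence uniqueness conditions --- are where I expect the main obstacle to lie. These encode planarity/non-crossing information: (c2) says $\mathcal{J}$ is weakly unlinked to $\mathcal{F}$ on the right, and (c6), (c7) say that periodic (resp. preperiodic) arguments with matching symbol sequences must coincide. The natural strategy is to derive all of these from the fact that $\gamma^0$ is an \emph{oriented} curve and hence admits an unlacing to a Jordan curve: the white tiles form a genuine planar subdivision, so the cyclic order of marked parameters around $\mathbb{S}^1$ is compatible with the planar embedding of the white tile structure, which is exactly the content of unlinkedness. For (c6) and (c7) I would argue that two arguments with the same (left, or left-and-right) symbol sequence correspond to points of $J_w$ that are identified under the Carathéodory semiconjugacy $\sigma_w$, and use the already-established equality $\overset{_{1,w}}{\sim}$-data $\Rightarrow \overset{_w}{\approx}$ together with Poirier's own characterization to get forced equality; the subtlety is that our $\gamma^1$ may have intersection points that map homeomorphically onto intersection points of $\gamma^0$ (which do \emph{not} arise in Meyer's Jordan setting), so I must confirm that the marking procedure's step of discarding such points is exactly what prevents spurious extra elements in the preargument sets. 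The cleanest route is probably to show our constructed $\Theta=(\mathcal{F},\mathcal{J})$ coincides with the critical portrait one would extract from the Hubbard tree / critical orbit data of the polynomial $P_w$ implicitly determined by $\overset{_w}{\approx}$, reducing (c2), (c6), (c7) to Poirier's theorem applied to that polynomial rather than verifying them from scratch.
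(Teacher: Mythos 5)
Your verification of (c1) via Riemann--Hurwitz, of the preargument-set property from $A_i=q_d^{-1}(q_d(\alpha))$, and of hierarchy (c3) from the ``preferred element'' iteration matches the paper's argument. But you have misplaced where the real simplification lies, and your fallback for the remaining conditions is circular. The paper's setting is an expanding Thurston map with \emph{no} periodic critical points (so $P_f^a=\varnothing$), hence the family $\mathcal{F}$ associated with critically periodic orbits is not merely ``may be empty'' --- it is empty, full stop. Consequently (c2), (c4), and (c6), the very conditions you single out as the main obstacle, are vacuously satisfied; the paper's proof turns entirely on this observation, leaving only (c1) (Riemann--Hurwitz, as in Meyer's Lemma 5.7), (c5) (immediate from strict preperiodicity), and (c3)/(c7) (enforced by the one-by-one choice of preferred representatives in the marking procedure). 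Your proposed planarity/unlacing argument for (c2) is therefore unnecessary, and you never actually supply an argument for (c7) beyond it.

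The more serious problem is your ``cleanest route'': reducing (c2), (c6), (c7) to Poirier's theorem applied to the polynomial ``implicitly determined by $\overset{_w}{\approx}$.'' In the logical order of the construction, the critical portrait comes first: it is the portrait that determines $P_w$ (via Poirier's theorem), and only then is the Carath\'{e}odory semiconjugacy $\sigma_w$, and hence $\overset{_w}{\approx}$, defined. Using $\overset{_w}{\approx}$ or the Hubbard tree of $P_w$ to certify that the marked data form a critical portrait presupposes the existence of the very polynomial whose existence the portrait is needed to secure. To repair your argument, drop that reduction, record that $\mathcal{F}=\varnothing$ in this setting, and argue (c7) directly from the construction: the preferred parameters are chosen so that distinct preperiodic markings with coinciding symbol-sequence itineraries are forced to be the same marked parameter, which is exactly what the one-critical-point-at-a-time selection is designed to guarantee.
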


\begin{proof}
As our case involves preperiodic critical points only; the criteria for a degree $d$ preargument set satisfying the conditions of a critical portrait are significantly simplified: any condition involving $\mathcal{F}$ is trivially satisfied because $\mathcal{F}=\varnothing$ here. The considerations for Rieman-Hurwitz are satisfied by an argument similar to the one that Meyer presents in the proof of \cite[Lemma 5.7]{MEYER2009EXPANDING}. That this serves as a critical portrait follows mostly from construction; the one-by-one development of critical markings is intended to select preferred elements to preserve the hiearchic structure required of the remaining considerations for critical markings.
\end{proof}

\subsection{An example, revisited}
We revisit the function $f$ of our earlier example, this time demonstrating the steps noted above.\\

\noindent (1) We begin with an oriented curve $\gamma^0$ which is fully f-invariant up to homotopy. As noted previously, this implies an existing parameterization for $\gamma^0$ and its pullback $\gamma^1$, but the construction only necessitates an understanding of the homotopy types of these curves, as conveyed in Figure \ref{pullbackcurve1}. 

\begin{figure}[htb]
\center{\includegraphics{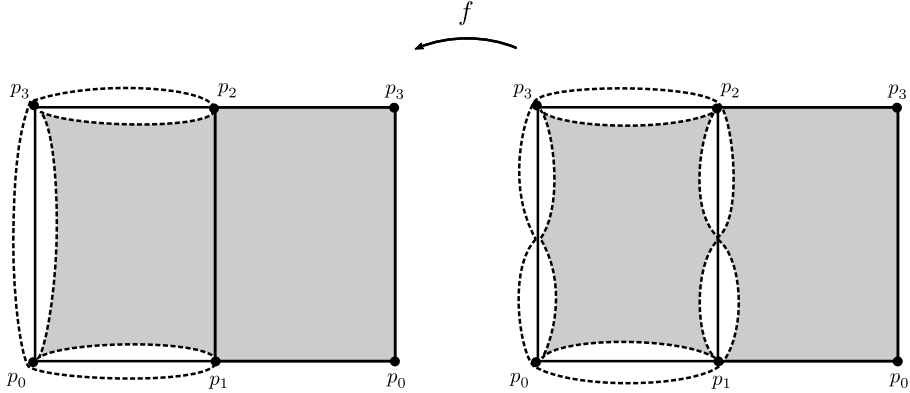}}
\caption{The dashed lines above note an oriented curve $\gamma^0$ with its marked postcritical set, as well as the curve $\gamma^1:=f^{-1}(\gamma^0)$.}
\label{pullbackcurve1}
\end{figure}

\noindent (2) We consider an unlacing of $\gamma^0$ and $\gamma^1$ and mark these curves accordingly, as in Figure \ref{pullbackcurve2}. Note that at each point of self-intersection on $\gamma^0$ all vertices are marked. Since $\gamma^0$ is fully $f$-invariant up to homotopy, we have that $f\circ \gamma^1=\gamma^0\circ q_2$, which induces a marking on $\gamma^1$. We construct the pseudo-isotopy $H^0$ of Theorem \ref{OG1} to respect this induced marking. 

\begin{figure}[htb]
\center{\includegraphics{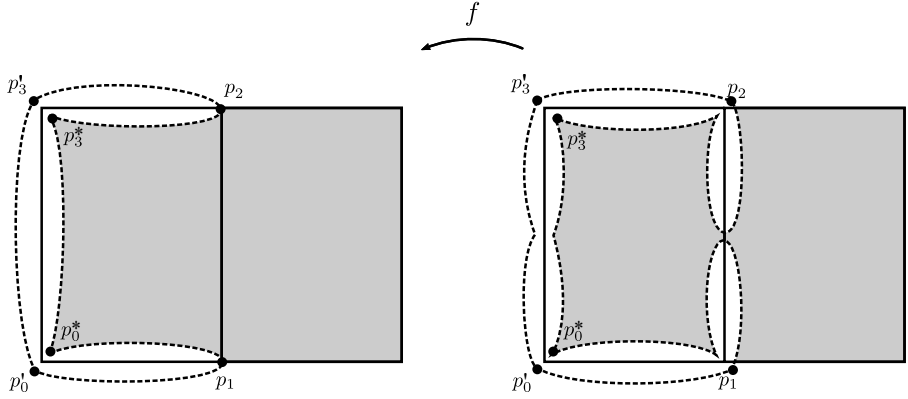}}
\caption{We select a marking of curves.} 
\label{pullbackcurve2}
\end{figure}

One may note that this is not the same oriented fully f-invariant curve of Kameyama's construction for the mating which is initially pictured in Figure \ref{FSRisamating}. This was done intentionally to highlight how the marking conventions allow us to ``keep track of postcritical points", as the later parameterizations of $\gamma^n$ fix values of $t$ at these locations. It should be noted that the procedure could be repeated with this curve to obtain the same polynomial decomposition, though. 

\noindent (3) We may repeatedly lift pseudo-isotopies to obtain a sequence $H^n$ and corresponding collection of curves $\gamma^n$. We take there to be some limit curve $\gamma$ whose parameterization we develop in the next step. 

\noindent (4) The parameterization of $\gamma$ is induced by a finite amount of information namely, we need to recover the parameters at each element of post. To do so, we mark the edges of $\gamma^0$, and note locations of their pullbacks via $f$, as in Figure \ref{pullbackcurve3}. 

\begin{figure}[htb]
\center{\includegraphics{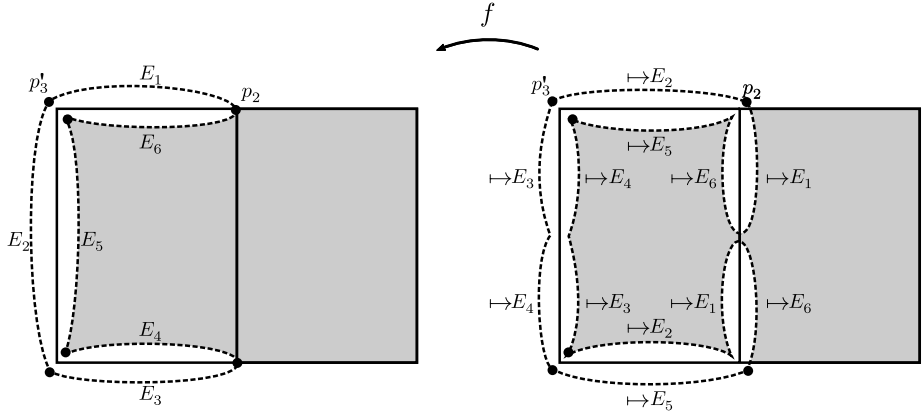}}
\caption{We label 0-edges $E_i$, as well as pullbacks. } 
\label{pullbackcurve3}
\end{figure}

We may then develop an edge transition matrix whose entries $a_{ij}$ are determined by counting how many 1-edges which map to $E_j$ are contained in the $H_1$ deformation of each edge $E_i$. For instance, we can see that $H^0$ deforms edge $E_6$ into an arc which contains sub-arcs mapping to $E_1, E_5,$ and $E_6$--and this deformation is responsible for each entry on the last row of the matrix below.

\begin{center}
$$\begin{bmatrix}
0&1&0&0&0&0\\
0&0&1&1&0&0\\
0&0&0&0&1&0\\
1&1&0&0&0&1\\
0&0&1&1&0&0\\
1&0&0&0&1&1\\

\end{bmatrix}$$
\end{center}

\vspace{.25in}

The spectral radius of this matrix matches the degree of $f$, which is 2. This eigenvalue corresponds to a unique real eigenvector, $[1\ 2\ 1\ 3\ 2\ 3]^T$. If we scale the entries of this vector so that the components sum to 1, each $i$th component $l_i$ represents the length of the interval $e_i\in\mathbb{S}^1$ which maps to $E_i$: For instance, the parameters mapping to $p_2$ and $f(p_2)=p_3'$ are $l_1=1/12$ units apart.

We can then use the lengths we've computed along with the mapping behavior of $f$ to compute parameters corresponding to each of the markings representing elements of post. Suppose $p_a,p_bf(p_a))\in$post are two postcritical points with corresponding parameters $t_a,t_b\in \mathbb{S}^1$. Then
 not only is $t_b=d\cdot t_a$, we also can find $t_b$ by adding the sum $l(t_a,t_b)$ of lengths of subintervals between $t_a$ and $t_b$ in $\mathbb{S}^1$.  
 
 We obtain that $d\cdot t_a=t_a+l(t_a,t_b)$ mod $1$, or that one choice for $t_a$ is $t_a=l(t_a,t_b)/(d-1)$. (Any other choices correspond to an isomorphism of the mating.) Applying this to $p_a=p_2$, we get that the parameter of $t$ corresponding to $p_2$ is 1/12. We can then successively add appropriate $l_i$ to this parameter to determine the successive parameters of marked elements of post along $\gamma^0$: e.g. 1/12 +$l_1$ gives the parameter for $p_3'$, adding $l_2$ to this gives the parameter for $p_0'$, and so on. We obtain that the parameters for our postcritical markings (starting at $p_2$ and traveling in a positive direction with respect to the white tile) are $\frac{1}{12}, \frac{1}{6}, \frac{1}{3}, \frac{5}{12}, \frac{2}{3}, $ and $\frac{5}{6}.$

\noindent (5) We do not picture the following pullbacks here, but the parameterization induced by the lifts of pseudo-isotopies generates an S1-parameterization of $\mathbb{S}^2$. As such, this map arises as a mating of two polynomials, $P_w,$ and $P_b$ whose critical portraits we find below.

\noindent (6) We note that the middle left ``pinched spot" on the white collection of $1-tiles$ maps non homeomorphically to the postcritical point $p_1$, and as such is one of our critical points. Applying $q_2^{-1}\circ 2_d$ to this parameter is the same as applying $q_2^{-1}(5/12)$ at $p_1$, which yields a critical portrait $P_w=\{\frac{5}{24}, \frac{17}{24}\}$. 

The similar construction for the black polynomial yields $P_b=\{\frac{1}{24},\frac{13}{24}\}$.

\section{Future study}\label{conclusion}

While this approach aligns the unmating approach more with the cases suggested in \cite{KAMEYAMA2003PART2}, it does leave something to be desired that the initialization of the algorithm is not as constructive. The author has interest in whether further study of more general subdivision rules (rather than those of just 2-tilings) could yield a more constructive approach, or if subdivisions could be used to single out which elements of post are the ``problematic" ones requiring multiple markings. Alternatively, in the spirit of noting that Thurston's pullback algorithm is frequently known to converge despite obstructions (e.g. \cite{BOYD2012MEDUSA}), there may be stipulations for when it is possible to force a 2-tile decomposition procedure without resorting to iterates of $f$.

Another potential avenue of investigation for the techniques presented here regard what have sometimes been called \emph{anti-pseudo-equators}. In some cases, the orientation of the pullback curve $f^{-1}(\mathcal{C})$ is reverse that of $\mathcal{C}$, which is one reason for passing through to an iterate of $f$ before attempting an unmating. For instance, in Example 6.6 of \cite{MEYER2014UNMATING} the pullback curve traverses $\text{post}$ in reverse order from that of $\mathcal{C}$. It has been conjectured by Meyer and Jung \cite{JUNG2020ANTIMATINGS} that these may be a result of functions called anti-matings, and that it may be possible to extend the unmating algorithm in these cases. Notably, similar maps have been studied by Timorin \cite{TIMORIN2008EXTERNAL} and Dastjerdi \cite{dastjerdi1991dynamics} in terms of laminations, and unlike matings where the two ``sides" of the lamination are fixed; these are represented by laminations where the two sides swap on each iteration.

 
\section*{Acknowledgements}
 
The author would like to thank Wolf Jung for his assistance in obtaining a difficult to find reference, as well as for a helpful review comment on a previous paper which prompted some of this investigation. She would also like to acknowledge the editor for his patience in handling this manuscript.

\bibliography{Wilkerson.bib}
\bibliographystyle{amsplain}
 
\end{document}